\documentclass[11pt,reqno]{amsart}
\usepackage[activeacute,english]{babel}
\pdfoutput=1
\usepackage{amssymb}
\usepackage{amsmath,latexsym,amsthm}
\usepackage{colonequals}
\usepackage{multicol}

\usepackage{graphicx}

\newcommand{\eq}[1]{\begin{equation}#1\end{equation}}
\newcommand{\equ}[1]{\begin{equation*}#1\end{equation*}}

\newtheorem{teo}{Theorem}[section]
\newtheorem{lema}{Lemma}[section]
\newtheorem{prop}{Proposition}[section]

\newcommand{\foral}{\hbox{ for all } }
\newcommand{\ve}{\varepsilon}
\newcommand{\la}{\lambda }

\makeatletter \renewenvironment{proof}[1][\proofname] {\par\pushQED{\qed}\normalfont\topsep6\p@\@plus6\p@\relax\trivlist\item[\hskip\labelsep\bfseries#1\@addpunct{.}]\ignorespaces}{\popQED\endtrivlist\@endpefalse} \makeatother

\numberwithin{equation}{section}
\def\R{\mathbb{R}}
\def\N{\mathbb{N}}

\pretolerance=2000
\tolerance=3000
\author{Manuel del Pino}
\address{\noindent Manuel del Pino -- Departamento de Ingenier\'{\i}a Matem\'atica and
CMM, Universidad de Chile, Casilla 170 Correo 3, Santiago, Chile.}
\email{delpino@dim.uchile.cl}

\author{Carlos Rom\'an}
\address{\noindent Carlos Rom\'an -- Departamento de Ingenier\'{\i}a Matem\'atica, Universidad de Chile, Santiago, Chile. Universit\'e Pierre et Marie Curie, Paris, France.}
\email{croman@dim.uchile.cl}

\title[Conformal metrics with prescribed Gauss curvature]{\textbf{Large Conformal metrics with prescribed sign-changing Gauss curvature}}
\begin{document}
\maketitle
\begin{abstract}
Let $(M,g)$ be a two dimensional compact Riemannian manifold of genus $g(M)>1$. Let $f$ be a smooth function on $M$ such that
$$f \ge  0, \quad f\not\equiv 0, \quad  \min_M f = 0. $$
Let $p_1,\ldots,p_n$  be any set of points at which $f(p_i)=0$ and $D^2f(p_i)$ is non-singular.
We prove that for all sufficiently small $\lambda>0$
 there exists a family of ``bubbling" conformal metrics $g_\lambda=e^{u_\lambda}g$ such that their Gauss curvature is given by the sign-changing function $K_{g_\lambda}=-f+\lambda^2$. Moreover, the family $u_\lambda$ satisfies
 $$
 u_\lambda(p_j) = -4\log\la -2\log \left (\frac 1{\sqrt{2}} \log \frac 1\la \right ) +O(1)
 $$
 and
  $$\lambda^2e^{u_\lambda}\rightharpoonup8\pi\sum_{i=1}^{n}\delta_{p_i},\quad \mbox{as }\lambda \to 0,$$
where $\delta_{p}$ designates Dirac mass at the point $p$.
\end{abstract}

\section{Introduction}
Let $(M,g)$ be a two-dimensional compact Riemannian manifold. We consider in this paper the classical {\em prescribed Gaussian curvature problem}: Given a real-valued, sufficiently smooth funtion $\kappa(x) $ defined on $M$, we want to know if $\kappa $ can be realized as the
Gaussian curvature $K_{g_1}$ of $M$ for a metric $g_1$, which is in addition conformal to $g$, namely  $g_1=e^{u}g$ for some scalar function $u$ on $M$.

 It is well known, by the uniformization theorem,  that without loss of generality we may assume that $M$ has constant Gaussian curvature for $g$, $K_g=:-\alpha$. Besides, the relation
 $K_{g_1}= \kappa $
 is  equivalent to the following nonlinear partial differential equation
\eq{\Delta_g u+ \kappa\, e^u +\alpha=0,\quad \mbox{in }M,\label{KW}}
where $\Delta_g$ is the Laplace Beltrami operator on $M$.
There is a considerable literature on necessary and sufficient conditions on the function $\kappa $ for the solvability of the PDE (\ref{KW}). We refer the reader in particular to
the classical references \cite{berger,cgy,kw1,kw2,kw3,moser} and to \cite{struwe} for a recent review of the state of the art for this problem.

Integrating equation (\ref{KW}), assuming that $M$ has surface area equal to one,  and using the Gauss-Bonet formula we obtain
\eq{\int_M  \kappa e^ud\mu_g =\int_M K_g d\mu_g=   -\alpha  =  2\pi \chi(M),\label{KW1}}
where $\chi(M)$ is the Euler characteristic of the manifold $M$.

\medskip
In what follows we shall assume that the surface $M$ has {\bf genus $g(M)$ greater than one}, so that $\chi(M)=2(1-g(M))<0$ and hence  $$-K_g = \alpha >0. $$
Then (\ref{KW1}) tells us that a necessary condition for existence is that $\kappa(x) $ be negative somewhere on $M$. More than this, we must have that
$$\int_M \kappa  d\mu_g<0.$$ Indeed testing equation (\ref{KW})  against $e^{-u}$ we get
\eq{\int_M \kappa d\mu_g=-\int_M (|\nabla_g u|^2 +\alpha)e^{-u}d\mu_g< 0. \label{KW2}}

\medskip
Solutions $u$ to equation (\ref{KW1}) correspond to critical points in the Sobolev space
$H^1(M,g)$ of the energy functional
\equ{E_\kappa (u)=\frac12\int_M |\nabla_g u|^2 d\mu_g\,-\, \alpha\int_M ud\mu_g-\int_M \kappa e^{u}d\mu_g.}
As observed in \cite{berger}, since  $\alpha >0$, we have that
 If $\kappa \leq 0$ and $\kappa \not\equiv 0$, then this functional is strictly convex and coercive in $H^1(M,g)$. It thus have a unique critical point which is a global minimizer of $E_\kappa $.

A natural question to ask is what happens when $f$ changes sign. A drastic change in fact occurs. If
$\sup_M \kappa >0$, then the functional $E_\kappa$ is no longer bounded below, hence a global minimizer cannot exist. On the other hand, intuition would tell us that if $\kappa $ is ``not too positive" on a set ``not too big", then the global minimizer should persist in the form of a local minimizer. This is in fact true, and quantitative forms of this statement can be found in
\cite{ab,b}.

\medskip
We shall focus in what follows in a special class of functions $\kappa(x) $ which change sign being nearly everywhere negative.
Let $f$ be a function of class $C^3(M)$ such that $$f \ge  0, \quad f\not\equiv 0, \quad  \min_M f = 0. $$
For $\lambda>0$ we let
$$
{ \kappa_\lambda (x)= -f(x) + \lambda^2.     }
$$
so that our problem now reads
\eq{\Delta_g u -fe^u+\lambda^2e^u+\alpha=0,\quad \mbox{in }M. \label{EQ}}
In \cite{g}, Ding and Liu proved that the global minimizer of $E_{\kappa_0}$ persists as a local minimizer  $\underline{u}_\lambda $ of $E_{\kappa_\lambda}$ for any $0<\lambda <\lambda_0$. From (\ref{KW2}) we see that
$$\lambda_0 < \left(\int_M f\right)^{1/2}.$$
Moreover, they  established the existence of a second, non-minimizing solution $u_\la$ in this range.
Uniqueness of the solution $u_0$ for $\la=0$, and its minimizing character, tell us that we must have $\underline{u}_\la \to u_0$ as $\la\to 0$ while $u_\la$ must become unbounded. The situation is depicted as a bifurcation diagram in Figure \ref{fig1}.

\begin{figure}
	\includegraphics[scale=1.1]{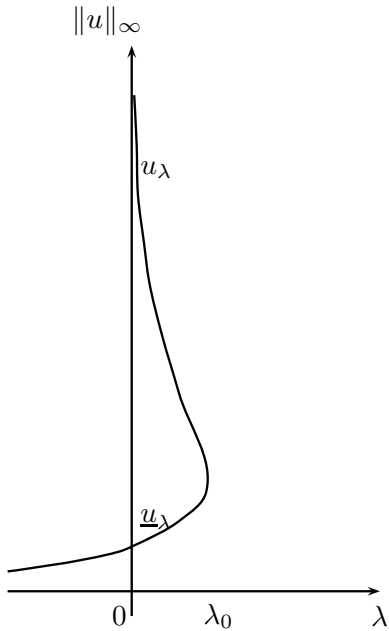}
\caption{Bifurcation diagram  for solutions of Problem \eqref{EQ}}
\label{fig1}\end{figure}

\medskip
The proof in \cite{g} does not provide information on its asymptotic blowing-up behavior or about the number of such ``large" solutions. 
Borer, Galimberti and Struwe \cite{struwe} have recently provided a new construction of the mountain pass solution for small $\la$, which allowed them to identify further properties of it under the following generic assumption: points of global minima of $f$ are non-degenerate. This means that if $f(p)=0$ then $D^2f(p)$ is positive definite.
In \cite{struwe} it is established that blowing-up of the family of large solutions $u_\la$ occurs only near zeros of $f$, and the associated metric exhibits ``bubbling behavior", namely  Euclidean spheres emerge around some of the zero-points of $f$. In fact, the mountain-pass characterization let them estimate the number of bubbling points as no larger than four.
More precisely, they find that along any sequence $\la=\la_k \to 0$, there exist points $p_1^k,\ldots, p_n^k$, $1\leq n \leq 4$, converging to $p_1,\ldots,p_n$ points of global minima of $f$ such that one of the following holds
\begin{enumerate}
\item[(i)] There exist $\varepsilon_\la^1,\ldots,\varepsilon_\la^k$, such that $\varepsilon_\la^i/\la \to 0$, $i=1,\ldots,k$, and in local conformal coordinates around $p_i$ there holds
\eq{u_\la(\varepsilon_\la^ix)-u_\la(0)+\log 8\rightarrow  w(x):=\log\frac{8}{(1+|x|^2)^2},\label{limite}}
smoothly locally in $\R^2$. We note that
\equ{\Delta w + e^w=0.}
\item[(ii)] In local conformal coordinates around $p_i$, with a constant $c_i$ there holds
\equ{u_n(\la x)+4\log(\la)+c_i\rightarrow w_\infty(x),}
smoothly locally in $\R^2$, where $w_\infty$ satisfies
\equ{\Delta_g w_\infty +[1-(Ax,x)]e^{w_\infty}+\alpha=0}
where $A=\frac{1}{2}D^2f(p_i)$.
\end{enumerate}
In this paper we will substantially clarify the structure of the set of large solutions of problem (\ref{EQ}) with a method that yields both multiplicity and accurate estimates of their blowing-up behavior. Roughly speaking we establish that for any given collection of non-degenerate global minima of $f$, $p_1,\ldots,p_k$, there exist a solution $u_\la$ blowing-up in the form (\ref{limite}) {\em exactly} at those points. Moreover \equ{\varepsilon_\lambda^i\sim \frac{\lambda}{|\log \lambda|},\quad u_\lambda(p_i)= -4\log \lambda-2\log \left(\frac{1}{\sqrt{2}}\log \frac{1}{\lambda} \right)+O(1).}
In particular if $f$ has exactly $m$ non-degenerate global minimum points, then $2^m$ distinct large solutions exist for all sufficiently small $\la$.\newline

In order to state our main result, we consider the singular problem
\eq{\Delta_g G-fe^G+8\pi\sum_{i=1}^{n} \delta_{p_i}+\alpha=0,\quad \mbox{in }M, \label{Green}}
where $\delta_{p_i}$ designates the Dirac mass at the point $p_i$. We have the following result.
\begin{lema}\label{lema1}
Problem (\ref{Green}) has a unique solution $G$ which is smooth away from the singularities and in local conformal coordinates around $p_i$ it has the form
\eq{G(x)=-4\log |x|-2\log \left(\frac{1}{\sqrt{2}}\log \frac{1}{|x|} \right)+\mathcal{H}(x),\label{Gr}}
where $\mathcal{H}(x)\in C(M)$.
\end{lema}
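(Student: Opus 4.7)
The plan is to split $G = \Phi + \psi$, where $\Phi$ is an explicit singular ansatz realising the leading behaviour in \eqref{Gr} and $\psi \in C(M)$ is the regular remainder obtained from a well-posed PDE; the continuity of $\mathcal{H}$ will then follow from the continuity of $\psi$. To construct $\Phi$, fix isothermal coordinates $x$ around each $p_i$ (so $g = e^{2\phi_i}|dx|^2$ with $\phi_i(0)=0$) and set
\[
\Phi^{(i)}(x) = -4\log|x| - 2\log\!\left(\tfrac{1}{\sqrt{2}}\log\tfrac{1}{|x|}\right) + c_i
\]
for $|x|$ small, glued via cutoffs to a function smooth on $M\setminus\{p_1,\ldots,p_n\}$. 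A direct computation gives $\Delta_g\Phi^{(i)} + 8\pi\delta_{p_i} = 2/(|x|^2\log^2(1/|x|)) + O(1)$, while $\nabla f(p_i)=0$ and the nondegeneracy of $D^2f(p_i)$ give
\[
fe^{\Phi^{(i)}} = \frac{2e^{c_i}Q_i(\theta)}{|x|^2\log^2(1/|x|)} + O\!\left(\frac{1}{|x|\log^2(1/|x|)}\right),
\]
where $Q_i(\theta) = \tfrac{1}{2|x|^2}\langle D^2f(p_i)x,x\rangle$ depends only on the polar angle. Choosing $c_i = -\log\bar Q_i$ with $\bar Q_i := \tfrac{1}{4}\Delta f(p_i) > 0$ cancels the angular mean of $2(e^{c_i}Q_i - 1)$; the remaining higher Fourier modes ($k\ge 1$) of the leading term are killed by additional bounded corrections $w_i(x) = O(1/\log^2(1/|x|))$ obtained by mode-by-mode inversion of the Laplacian.

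Writing $G = \Phi + \psi$, the equation reduces to $\Delta_g\psi - fe^\Phi(e^\psi - 1) = -R$ with $R := \Delta_g\Phi + 8\pi\sum_i\delta_{p_i} - fe^\Phi + \alpha$ having controlled singularity. I would obtain $\psi$ as the unique minimiser of the strictly convex, coercive functional
\[
J(\psi) = \tfrac{1}{2}\int_M|\nabla_g\psi|^2\,d\mu_g + \int_M fe^\Phi(e^\psi - \psi - 1)\,d\mu_g - \int_M R\psi\,d\mu_g
\]
on $H^1(M)$: coercivity combines Moser--Trudinger (controlling $\int fe^\Phi e^{\psi - \bar\psi}$ in terms of $\|\nabla_g\psi\|_2$) with the integral compatibility $\int fe^{\Phi+\psi} = 8\pi n + \alpha|M|$ fixing the constant mode, and standard elliptic bootstrap lifts the minimiser to $C(M)$. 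Uniqueness is a short energy argument: if $G_1,G_2$ are two solutions then $v := G_1 - G_2 \in C(M)$ satisfies $\Delta_g v = fe^{G_2}(e^v - 1)$, and testing against $v$ yields
\[
-\int_M|\nabla_g v|^2\,d\mu_g = \int_M fe^{G_2}\,v(e^v - 1)\,d\mu_g \ge 0
\]
since $s(e^s - 1) \ge 0$, forcing $v$ to be constant; integrating the equation against $1$ and using $\int fe^{G_1} = e^v\int fe^{G_2}$ then gives $v \equiv 0$.

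The main obstacle is the ansatz construction: the forcing $1/(|x|^2\log^2(1/|x|))$ produced by the double-log profile is only borderline integrable, so the sharp choice $c_i = -\log\bar Q_i$ (to cancel the angular mean) and the $w_i$-corrections (to kill the higher angular modes) are essential to drive the residual $R$ into a regularity class that allows the bootstrap to deliver $\psi\in C(M)$ rather than merely some weaker oscillation class; carrying out these cancellations in a controlled way and verifying the continuity of $\mathcal{H}$ at each $p_i$ is the most delicate part of the proof.
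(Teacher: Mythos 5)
Your plan shares the paper's overall philosophy (peel off a singular profile, treat the remainder variationally, then examine it near $p_i$ angular mode by angular mode), and you even identify the correct value $\mathcal{H}(p_i)=-\log\bigl(\tfrac14\Delta f(p_i)\bigr)$. But two steps do not go through as described. For existence, your coercivity via Moser--Trudinger is blocked by the weight $fe^\Phi$, which behaves like $1/(|x|^2\log^2(1/|x|))$ near $p_i$: this is in $L^1$ but in no $L^{1+\varepsilon}$, so you cannot pay for $\int fe^\Phi e^\psi$ with a H\"older/Moser--Trudinger split, and the singular Moser--Trudinger inequalities cover weights $|x|^{-2\alpha}$ only for $\alpha<1$, not this borderline case. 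The paper sidesteps the issue by first producing ordered sub/supersolutions $\underline H=\beta_1 h_0$, $\overline H=\beta_2 h_0$ and then minimising a \emph{truncated} energy (the integrand $F(H)$ is made linear in $H$ outside $[\underline H,\overline H]$), which is automatically well-defined and coercive; the minimiser is afterwards squeezed between the barriers.

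For continuity, ``standard elliptic bootstrap'' does not close. Even with the angular modes of $R$ cancelled by corrections $w_i$, the equation for $\psi$ still carries $fe^\Phi(e^\psi-1)$, and the coefficient $fe^\Phi$ keeps the full singular weight. With $\psi$ merely bounded this term is only $L^1$, and $\Delta\psi\in L^1$ does not yield $\psi\in W^{2,1}$ (Calder\'on--Zygmund fails at $p=1$), so the bootstrap stalls; you would need $\psi(p_i)=0$ to make progress, which is exactly the statement under proof. Moreover a one-shot correction $w_i$ cannot cancel all modes: $fe^\Phi(e^{w_i}-1)$ reintroduces them at the next logarithmic order, and iterating only raises the power of $\log$ while the $|x|^{-2}$ persists, so $R$ never reaches $L^{1+\varepsilon}$. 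The paper therefore performs the Fourier-mode analysis on the solution itself: after $r=e^t$, barriers of the form $\pm\bigl(C/(k^2 t^2)+\rho e^{-kt}\bigr)$ give $|a_k(r)|\le C/(k^2\log^2 r)$ for $k\ge1$; the radial mode then needs a second logarithmic change of variables $-t=e^s$ and a contraction-mapping argument to show $a_0(r)\to L$. That radial fixed-point step is what pins down $\mathcal{H}(p_i)$ and proves continuity, and there is no generic regularity shortcut around it. Your uniqueness argument is fine in spirit but also presupposes continuity (you need $\int fe^{G_2}v(e^v-1)<\infty$, hence $v(p_i)=0$), so it cannot run first; the paper gets uniqueness directly from strict convexity of the truncated energy.
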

Our main result is the following.
\begin{teo}\label{main}
Let $p_1,\ldots,p_n$ be points such that
$f(p_i)=0$ and $D^2f(p_i)$ is positive definite for each $i$. Then, there exists a family of solutions $u_\lambda$ to (\ref{EQ}) with
\equ{\lambda^2 e^{u_\lambda}\rightharpoonup8\pi \sum_{i=1}^n \delta_{p_i},\quad \mbox{as }\la \rightarrow 0,}
and $u_\la \to G$ uniformly in compacts subsets of $M\setminus \left\{p_1,\ldots,p_k \right\}$. We define
\equ{c_i=\frac12e^{\mathcal{H}(p_i)/2},\quad\delta_\lambda^i=\frac{c_i}{|\log\lambda|},\quad \varepsilon_\lambda^i=\lambda\delta_\lambda^i}
where $\mathcal{H}$ is defined near $p_i$ by relation (\ref{Gr}).
In local conformal coordinates around $p_i$, there holds
\equ{u_\lambda(\varepsilon_\la^i x)+4\log \lambda + 2\log \delta_\la^i\rightarrow \log\frac{8}{(1+|x|^2)^2},}
uniformly on compact sets of $\R^2$ as $\la\rightarrow 0$.
\end{teo}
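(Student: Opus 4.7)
The plan is singular perturbation via Lyapunov-Schmidt reduction. We build an explicit approximate solution $U_\lambda$ that mimics, near each $p_i$, a standard Liouville bubble at scale $\varepsilon_\lambda^i=\lambda\delta_\lambda^i$ centered at a point $\xi_i$ close to $p_i$, and that agrees, far from the $p_i$'s, with the singular Green's function $G$ provided by Lemma \ref{lema1}. The true solution is then sought as $u_\lambda=U_\lambda+\phi$, with $\phi$ a small correction obtained by inverting the linearization on the orthogonal complement of its approximate kernel.

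\medskip

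\textbf{Ansatz and matching.} In local isothermal coordinates $y$ around each $p_i$ let $W_{\varepsilon,\xi}(y)=\log\frac{8\varepsilon^2}{(\varepsilon^2+|y-\xi|^2)^2}$, the standard bubble with $\Delta W+e^W=0$ and $\int_{\R^2}e^W=8\pi$. In a fixed disk around $p_i$ we take $U_\lambda(y)\approx W_{\varepsilon_\lambda^i,\xi_i}(y)-4\log\lambda-2\log\delta_\lambda^i$, and outside such disks $U_\lambda\approx G$, the two pieces being glued smoothly by a cutoff. The far-field expansion of the inner piece is $\log 8+2\log\delta_\lambda^i-4\log|y-\xi_i|$, while the near-singularity expansion of $G$ from (\ref{Gr}) is $-4\log|y-p_i|-2\log\bigl(\tfrac{1}{\sqrt 2}\log\tfrac{1}{|y-p_i|}\bigr)+\mathcal{H}(p_i)+o(1)$. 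Equating these two expansions at the intermediate scale $|y-p_i|\sim\lambda$ forces $2\log\delta_\lambda^i+\log 8=-2\log\bigl(\tfrac{|\log\lambda|}{\sqrt 2}\bigr)+\mathcal{H}(p_i)$, which uniquely selects $\delta_\lambda^i=c_i/|\log\lambda|$ with $c_i=\tfrac12 e^{\mathcal{H}(p_i)/2}$, as claimed.

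\medskip

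\textbf{Linear theory and nonlinear solvability.} Set $S(u):=\Delta_g u-fe^u+\lambda^2 e^u+\alpha$ and write $u_\lambda=U_\lambda+\phi$, so the equation becomes $L_\lambda\phi=-S(U_\lambda)-N(\phi)$, where $L_\lambda\phi=\Delta_g\phi+(\lambda^2-f)e^{U_\lambda}\phi$ and $N(\phi)=(\lambda^2-f)e^{U_\lambda}(e^\phi-1-\phi)$. Using that $W$ solves the Liouville equation exactly and that $f(p_i)=0=\nabla f(p_i)$, one shows $\|S(U_\lambda)\|_*=o(1)$ in a suitable weighted $L^\infty$ norm. After the rescaling $y=\xi_i+\varepsilon_\lambda^i x$ near each bubble, $L_\lambda$ converges to $\Delta+e^W$ on $\R^2$, whose bounded kernel is three-dimensional, spanned by $Z_{i,0}=\partial_\varepsilon W_{\varepsilon,\xi}|_{(\varepsilon_\lambda^i,\xi_i)}$ and $Z_{i,\ell}=\partial_{\xi_\ell} W_{\varepsilon,\xi}|_{(\varepsilon_\lambda^i,\xi_i)}$, $\ell=1,2$. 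A standard blow-up/contradiction argument, exploiting the classical nondegeneracy of $W$ in $\R^2$, then yields an a priori estimate for $L_\lambda$ modulo these modes, uniform in $\lambda$; a contraction mapping in the weighted space produces a unique small $\phi=\phi(\xi,\lambda)$ solving the projected problem $L_\lambda\phi+S(U_\lambda)+N(\phi)=\sum_{i,\ell}c_{i\ell}(\xi)Z_{i,\ell}$.

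\medskip

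\textbf{Reduction and main obstacle.} It remains to choose $\xi=(\xi_1,\ldots,\xi_n)$ near $(p_1,\ldots,p_n)$ so that all Lagrange multipliers $c_{i\ell}(\xi)$ vanish; equivalently, one seeks a critical point of the reduced energy $\mathcal{I}_\lambda(\xi):=E_{\kappa_\lambda}(U_\lambda+\phi)$ near $(p_1,\ldots,p_n)$. The leading $\xi$-dependent contribution comes from $\int_M f e^{U_\lambda}\,d\mu_g\sim\sum_i 8\pi f(\xi_i)/\lambda^2$, so since $D^2 f(p_i)$ is positive definite the map $\xi\mapsto\sum_i f(\xi_i)$ has a strict nondegenerate minimum at $(p_1,\ldots,p_n)$; this persists under lower-order perturbations and an implicit-function or Brouwer-degree argument produces a critical point $\xi_\lambda\to(p_1,\ldots,p_n)$. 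The weak convergence $\lambda^2 e^{u_\lambda}\rightharpoonup 8\pi\sum_i\delta_{p_i}$ and the stated local profile then follow directly from the construction. The hardest part of the proof is the precise tracking of the unusual $\log\log$ correction in $G$ through every layer of the argument: it is responsible for the non-algebraic scale $\delta_\lambda^i\sim|\log\lambda|^{-1}$, it forces the ansatz to include corrections beyond the naive shifted bubble, and it must be kept under uniform control in both the error $\|S(U_\lambda)\|_*$ and the weighted-norm estimates for $L_\lambda$ so that the reduced problem is ultimately governed only by $f$.
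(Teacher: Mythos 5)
Your overall architecture — bubble ansatz matched to the nonlinear Green's function, Lyapunov--Schmidt reduction, degree/variational argument for the reduced problem — is the same as the paper's, and your derivation of $\delta_\lambda^i = c_i/|\log\lambda|$ from the matching is exactly right. But there is a genuine gap in the linear-theory step that the paper goes to some lengths to avoid.

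You project onto all three approximate-kernel modes $Z_{i,0},Z_{i,1},Z_{i,2}$ per bubble, giving $3n$ Lagrange multipliers $c_{i\ell}(\xi)$, but your only free parameters are the $n$ centers $\xi_i\in\R^2$, i.e.\ $2n$ unknowns. The scale $\delta_\lambda^i$ has already been \emph{consumed} by the matching with $G$, so it is not available to kill the $c_{i,0}$'s, and the reduction system as you set it up is overdetermined. The paper's resolution is to impose orthogonality \emph{only} to the translation modes $Z_1,Z_2$ (the paper explicitly remarks on this after Proposition \ref{PropLin}), and to prove the a priori estimate without projecting out $Z_0$. This is delicate precisely because $Z_0$ \emph{is} in the kernel of the blow-up limit $\Delta+e^W$: Step 2 of the paper's proof shows that a contradicting sequence converges to $C_0 Z_0$, and Step 3 rules out $C_0\ne 0$ by a barrier/supersolution argument in an annulus that exploits the fact that the approximation matches $G$ at the outer boundary of the inner region, so the boundary data is already ``$Z_0$-free.'' That argument in turn only works because the inner approximation is not the naive shifted bubble $W_{\varepsilon,\xi}-4\log\lambda-2\log\delta$ but $W_{\varepsilon,\xi}+\tilde F$, where $\tilde F$ is the explicit radial solution of $\Delta F-(\delta^2/r^2)e^F=0$ built in Section \ref{SecAprox}; this correction absorbs the $\log\log$ tail of $G$ and makes the error $E$ small in the $\|\cdot\|_*$ norm. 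You flag that the ansatz must ``include corrections beyond the naive shifted bubble'' but never say what they are, and without $\tilde F$ the error near the matching scale is not $o(1)$ in the weighted norm, so the contraction step would not close. Finally, on the reduction: the paper computes $c_{i}(k)$ directly (testing against $\eta Z_i$) and obtains a system $\delta^2 D^2 f(p) k=\varepsilon b(k)$ solved by Brouwer degree; your reduced-energy route is a legitimate alternative for the translation parameters, but it presupposes the kernel count has been fixed first.
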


Our proof consists of the construction of a suitable first approximation of a solution as
required, and then solving by linearization and a suitable Lyapunov-type reduction
There is a large literature in Liouville type equation in two-dimensional domains or compact manifold, in particular concerning construction and classification of
blowing-up families of solutions. See for instance \cite{L0,L1,L2,L3,L4,L5} and their references.

\medskip
We shall present the detailed proof of our main result in the case of one bubbling point $n=1$.  In the last section we explain the necessary (minor, essentially notational) changes  for general $n$.  Thus, we consider the problem
\eq{\Delta_g u-fe^u+\lambda^2 e^u +\alpha=0,\quad \mbox{in }M, \label{problem}}
under the following hypothesis: there exists a point $p\in M$ such that $f(p)=0$ and $D^2f(p)$ is positive definite.

\section{A nonlinear Green's function}{\label{S3}}
We consider the singular problem
\eq{\Delta_g G-fe^G+8\pi \delta_p+\alpha=0,\quad \mbox{in }M \label{ProbLim}}
where $\delta_p$ is the Dirac mass supported at $p$, which is assume to be a point of global non-degenerate minimum of $f$. In this section we will establish the following result,
which corresponds to the case $n=1$ in Lemma \ref{lema1}

\begin{lema}\label{lema2}
Problem (\ref{ProbLim}) has a unique solution $G$ which is smooth away from the singularities and in local conformal coordinates around $p$ it has the form
\eq{G(x)=-4\log |x|-2\log \left(\frac{1}{\sqrt{2}}\log \frac{1}{|x|} \right)+\mathcal{H}(x),\label{Gr}}
where $\mathcal{H}(x)\in C(M)$.
\end{lema}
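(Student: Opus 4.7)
My plan is to reduce the singular PDE to a regular one by extracting an explicit ansatz that captures the expected $-4\log|x|-2\log(\tfrac{1}{\sqrt 2}\log\tfrac{1}{|x|})$ behavior, and then to solve the reduced equation variationally. Working in local isothermal coordinates at $p$ with metric $e^{2\sigma}|dx|^2$ and $\sigma(0)=0$, fix a smooth cutoff $\eta$ supported in a small ball around $p$ and equal to $1$ in a smaller one, and define
$$\mathcal{G}(x)=-4\eta(x)\log|x|-2\eta(x)\log\left(\frac{1}{\sqrt 2}\log\frac{1}{|x|}\right),$$
extended smoothly to $M\setminus\{p\}$. Writing $G=\mathcal{G}+\mathcal{H}$, equation \eqref{ProbLim} becomes
$$\Delta_g \mathcal{H}=f\,e^{\mathcal{G}}\,e^{\mathcal{H}}-\tilde\alpha(x),$$
where $\tilde\alpha=\alpha+\Delta_g\mathcal{G}+8\pi\delta_p$ contains no Dirac component, because the $-4\log|x|$ piece in $\mathcal{G}$ contributes exactly $-8\pi\delta_p$ to $\Delta_g\mathcal{G}$. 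A local computation shows that near $p$ both $\tilde\alpha$ and $fe^{\mathcal G}$ have leading behavior $|x|^{-2}(\log\tfrac{1}{|x|})^{-2}$, which lies in $L^1$ locally but not in $L^p$ for any $p>1$.

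For existence, the reduced PDE is the Euler--Lagrange equation of the functional
$$E(\mathcal{H})=\int_M \tfrac12|\nabla\mathcal{H}|^2\,d\mu_g+\int_M fe^{\mathcal{G}}e^{\mathcal{H}}\,d\mu_g-\int_M \tilde\alpha\,\mathcal{H}\,d\mu_g,$$
which is strictly convex since $fe^{\mathcal{G}}\geq 0$ and $\not\equiv 0$. Coercivity on $H^1(M)$ follows from $\int_M\tilde\alpha\,d\mu_g=8\pi+\alpha|M|>0$ together with the Moser--Trudinger inequality, handling the three regimes: as the mean $\bar{\mathcal H}\to-\infty$ the linear term $-(\int\tilde\alpha)\bar{\mathcal H}$ dominates; as $\bar{\mathcal H}\to+\infty$ the exponential $\int fe^{\mathcal G}e^{\mathcal H}$ dominates; and Poincar\'e controls the mean-zero part via the Dirichlet energy. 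The direct method produces a unique minimizer $\mathcal{H}$, which is automatically in $C^\infty(M\setminus\{p\})$ by standard elliptic regularity applied to the regular equation.

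The delicate step is continuity of $\mathcal{H}$ at $p$. In polar coordinates $x=r\omega$ near $p$, both source terms share leading behavior $r^{-2}L^{-2}\psi(\omega,r)$ with $L=\log(1/r)$ and $\psi$ bounded. The specific normalization $1/\sqrt{2}$ in the ansatz, together with the implicit value $\mathcal{H}(p)$, enforces the compatibility condition $e^{\mathcal{H}(p)}\operatorname{tr}D^2f(p)=4$, which kills the radial average $\int_{S^1}\psi(\omega,0)\,d\omega$. This eliminates the otherwise logarithmically divergent radial mode, and a Fourier-in-$\theta$ decomposition of the remaining ODEs $u_k''+r^{-1}u_k'-k^2r^{-2}u_k=r^{-2}L^{-2}\psi_k$ shows each non-radial mode is of size $O(L^{-2})$, yielding finite limits at $p$. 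Uniqueness is standard: for two solutions $G_1,G_2$ the difference $w=G_1-G_2\in C(M)$ satisfies $\Delta_g w=f(e^{G_1}-e^{G_2})$; testing against $w$ and integrating by parts (legitimate because $w\in L^\infty$, $\nabla w\in L^2$, and $fe^{G_i}\in L^1$) yields
$$\int_M|\nabla w|^2\,d\mu_g+\int_M f(e^{G_1}-e^{G_2})(G_1-G_2)\,d\mu_g=0,$$
both terms being non-negative, so both vanish; since $f>0$ on a positive-measure set, $w\equiv 0$. The main obstacle is this regularity at $p$: because the source sits exactly at the $L^1$ threshold, neither Sobolev embedding nor Calder\'on--Zygmund theory gives continuity off the shelf, and one must exploit the precise log-log structure of the ansatz through the angular analysis above.
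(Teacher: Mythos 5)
Your overall strategy --- peel off the singular profile $-4\log|x|-2\log\bigl(\tfrac1{\sqrt2}\log\tfrac1{|x|}\bigr)$, reduce to a regular equation for $\mathcal H$, solve by a convex variational argument, and then establish continuity at $p$ by a Fourier-in-$\theta$ decomposition in which the $1/\sqrt2$ normalization produces a cancellation in the radial mode --- is the same as the paper's, and the identity $e^{\mathcal H(p)}\operatorname{tr}D^2f(p)=4$ that you extract is indeed what emerges from the limiting radial ODE (in the paper's notation $e^{\mathcal H(p)}=1/\alpha_0$ with $\alpha_0=\tfrac14\operatorname{tr}D^2f(p)$). However, your variational step has a genuine gap: the weight $fe^{\mathcal G}$ behaves like $|x|^{-2}(\log\tfrac1{|x|})^{-2}$ near $p$ and is therefore only in $L^1$, so $\int_M fe^{\mathcal G}e^{\mathcal H}$ is not finite on all of $H^1(M)$, and --- more importantly --- a minimizer is not automatically in $L^\infty(M)$. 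A function behaving like $-\log\log\tfrac1{|x|}$ near $p$ has finite energy, so boundedness is not free, and ``standard elliptic regularity'' cannot give it because the source sits at the $L^1$ threshold. Without the $L^\infty$ bound, the Fourier modes $a_k(r)$ are not a priori bounded and the angular analysis cannot even start. The paper obtains this bound by first producing ordered bounded sub/supersolutions $\underline H=\beta_1 h_0$, $\overline H=\beta_2 h_0$ (with $-\Delta_g h_0+fh_0=1$), then minimizing a \emph{truncated} functional whose integrand is continued affinely outside $[\underline H,\overline H]$; a test-function argument with $(\underline H-\mathcal H)_+$ and $(\mathcal H-\overline H)_+$ traps the minimizer between the barriers, giving $L^\infty$, the true Euler--Lagrange equation, and, by strict convexity of the truncated functional on its natural domain, uniqueness all at once. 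This ingredient is missing from your proposal.

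The second gap is in the radial mode. You present the compatibility identity as if it were ``enforced'' by the ansatz and then eliminates a divergence, but $\mathcal H(p)$ is not a parameter one chooses --- it is an unknown value of the solution --- and the radial mode $a_0(r)$ satisfies a genuinely \emph{nonlinear} ODE of the form $-\tilde a_0''+2t^{-2}(\alpha_0 e^{b_0}e^{\tilde a_0}-1)=O(t^{-4})$ after $r=e^t$. What must be proven is that $a_0$ has a limit at all. In the paper this requires real work: from the $L^\infty$ bound one first extracts a sequence $t_n\to-\infty$ along which $\tilde a_0(t_n)\to L=-\log(\alpha_0 e^{b_0})$, then one proves uniqueness of bounded radial solutions, and finally, via the further change of variables $-t=e^s$ and a contraction-mapping argument for the perturbation $\phi=A_0-L$, one establishes the full limit. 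Your estimate $|a_k|=O\bigl((\log\tfrac1r)^{-2}\bigr)$ for $k\ge1$ does match the paper's (there proven by explicit comparison functions $\pm C_1(k^2t^2)^{-1}+\rho e^{-kt}$ and the maximum principle), but the radial-mode argument you sketch would not, by itself, produce a proof of convergence; the nonlinear fixed-point step is essential. Finally, a small circularity: your uniqueness argument presupposes $w=G_1-G_2\in C(M)$, i.e.\ it must run after the regularity step; the paper avoids this by getting uniqueness directly from convexity.
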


\begin{proof}

In order to construct a solution to this problem, we first consider the equation, in local conformal coordinates around $p$, for $\gamma \ll 1$
\eq{
\Delta \Gamma-fe^\Gamma+8\pi \delta_0=0,\quad \mbox{in } B(0,\gamma).\label{eqGamma}}
Since
\equ{
-\Delta \log \frac{1}{|x|^4}=8\pi \delta_0,
}
we look for a solution of (\ref{eqGamma}) of the form $\Gamma= -4\log |x|+h(x)$, where $h$ satisfies
\eq{\label{GreenLocal}
\Delta h-f(x)\frac{1}{|x|^4}e^h=0,\quad \mbox{in } B(0,\gamma).}
Since $p$ is a non-degenerate point of minimum of $f$, we may assume that, in local conformal coordinates around $p$, there exist positive numbers $\beta_1,\beta_2,\gamma$ such that
\eq{\beta_1|x|^2\leq f(x)\leq \beta_2 |x|^2,\label{nonDeg}}
for all $x\in B(0,\gamma)$.
Letting $r=|x|$, it is thus important to consider the equation
\eq{\Delta V-\frac{1}{r^2}e^V=0,\quad \mbox{in } B(0,\gamma). \label{eqV}}
For a radial function $V=V(r)$, this equation becomes
\eq{V''(r)+\frac{1}{r}V'(r)-\frac{1}{r^2}e^{V(r)}=0,\quad 0<r<\gamma. \label{eqV2}}
We make the change of variables $r=e^t$, $v(t)=V(r)$, so that equation (\ref{eqV2}) transforms into
\equ{\frac{d^2}{d t^2}v(t)=e^{v(t)},\quad -\infty <t<\log \gamma.}
from where it follows that
\equ{\frac{d}{dt}\left(\frac{v'(t)^2}{2}-e^{v(t)}\right)=0,}
or $v'(t)^2=2(e^v+C)$, for some constant $C$. Choosing $C=0$, we have
\equ{\frac{d}{dt}\left(e^{-v(t)/2}\right)=-\frac{1}{\sqrt{2}}.}
Integrating and coming back to the original variable, we deduce that
\equ{V(r)=-2\log\left(\frac{1}{\sqrt{2}} \log \frac{1}{r} \right)}
is a radial solution of equation (\ref{eqV}).
From condition (\ref{nonDeg}) we readily find that $h_1(x)=V(|x|)-\log\beta_1$ is a supersolution of (\ref{GreenLocal}), while $h_2(x)=V(|x|)-\log\beta_2$ is a subsolution of (\ref{GreenLocal}). This suggest us to look a solution to (\ref{GreenLocal}) of the form $V(|x|)+O(1)$.

Now we deal with existence of a solution of problem (\ref{ProbLim}). The previous analysis suggest that the singular part of the Green's function, in local conformal coordinates around $p$, is
\equ{\Gamma(x):=-4\log |x|+V(|x|),}
so we look for a solution of (\ref{ProbLim}) of the form $u=\eta \Gamma + H$, where $\eta$ is a smooth cut-off function such that $\eta\equiv 1$ in $B(0,\frac\gamma2)$ and $\eta\equiv 0$ in $\R^2\setminus B(0,\gamma)$. Therefore, $H$ satisfies the equation
\eq{\Delta_g H-fe^{\eta \Gamma}e^H+\alpha=-\eta fe^\Gamma -2\nabla_g \eta \nabla_g \Gamma -\Gamma \Delta_g \eta=:\Theta,\quad \mbox{in }M\label{eqReg}.}
Observe that $fe^{\eta \Gamma}\in L^1(B(0,\gamma))$. Next we find ordered global sub and supersolutions for (\ref{eqReg}). Let us consider the problem
\equ{-\Delta_g h_0+fh_0=1,\quad\mbox{in }M,}
which has a unique non-negative solution of class $C^{2,\beta}, 0<\beta<1$. Observe that
\equ{\Delta_g \beta h_0-fe^{\eta \Gamma}e^{\beta h_0}+\alpha-\Theta=-\beta + f\beta h_0 -fe^{\eta \Gamma}e^{\beta h_0}+\alpha-\Theta,}
so if we choose $\beta=\beta_1<0$ small enough, then $\underline{H}:=\beta_1h_0$ is a subsolution of (\ref{eqReg}), while if we choose $\beta=\beta_2>0$ large enough, then $\overline{H}:=\beta_2h_0$ is a supersolution of (\ref{eqReg}).

We consider the space
\equ{X=\left\{H\in H^1(M,g) \ \vline \ \int_Mfe^{\eta \Gamma}e^H <  \infty \right\},}
and the energy functional
\eq{E(H)=\frac12\int_M|\nabla_g H|^2 + \int_M fe^{\eta \Gamma}F(H)+\int_M(-\alpha+\Theta) H,\label{energ}}
where
\equ{
F(H(x))=\left\lbrace
\begin{array}{cl}
e^{\underline{H}(x)}(H-\underline{H}(x))&\quad H<\underline{H}(x),\\
e^H-e^{\underline{H}(x)}&\quad H\in[\underline{H}(x),\overline{H}(x)],\\
e^{\overline{H}(x)}(H-\underline{H}(x))&\quad H>\overline{H}(x).
\end{array}\right.}
Observe that since $h_0\in L^{\infty}(M,g)$ and $fe^{\eta \Gamma}\in L^1(B(p,\gamma))$, then $\overline{H},\underline{H}\in X$, which means that the functional $E$ is well defined in $X$.
Since
\equ{\int_M -\Delta_g(\eta \Gamma)=-\lim_{a\rightarrow 0}\int_{\partial B(p,a)} \frac{\partial \Gamma}{\partial r}=8\pi,}
we conclude that
\equ{\int_M\Theta =\int_M(-\Delta_g(\eta \Gamma)-8\pi \delta_p)=0.}

Besides $\alpha>0$, so the functional $E$ is coercive in $X$. We claim that $E$ attains a minimum in $X$. In fact, taking $H_n\in X$ such that
\equ{\lim_{n\rightarrow \infty} E(H_n)=\inf_{H\in X}E(H)>-\infty,}
and passing to a subsequence if necessary, we obtain
\equ{H_n\rightarrow \mathcal{H}\in X \mbox{ (in $L^2$)},\, \nabla_g H_n \rightharpoonup \nabla_g \mathcal{H} \mbox{ (weakly in $L^2$)},\, E(\mathcal{H})=\inf_{H\in X}E(H).}
Observe that if we take $\varphi\in C^\infty(M)$ then $\mathcal{H}+\varphi \in X$,  we can differentiate and obtain
\equ{\left. \frac{\partial}{\partial t}E(\mathcal{H}+t\varphi)\right|_{t=0}=0,\quad \mbox{for all } \varphi \in C^\infty (M,g) } or
\eq{\int_M \nabla_g \mathcal{H}\cdot \nabla_g \varphi +
\int_M fe^{\eta \Gamma} G(\mathcal{H})\varphi+
\int_M (-\alpha+\Theta)\varphi=0,\label{Eqenergy}}
where
\equ{
G(H)=\left\lbrace
\begin{array}{cl}
e^{\underline{H}(x)}&\quad H<\underline{H}(x),\\
e^H&\quad H\in[\underline{H}(x),\overline{H}(x)],\\
e^{\overline{H}(x)}&\quad H>\overline{H}(x).
\end{array}\right.}
By suitably approximating
$H_1=(\underline{H}-\mathcal{H})_+ $, we can use it as a test function in
\eqref{Eqenergy} and obtain
\equ{\int_M \nabla_g \mathcal{H}\cdot \nabla_g H_1 +
\int_M fe^{\eta \Gamma} G(\mathcal{H})H_1+
\int_M (-\alpha+\Theta)H_1=0.}
Since $\underline{H}$ is a subsolution for Equation (\ref{eqReg}),  we have
\equ{\int_M \nabla_g \underline{H}\cdot \nabla_g H_1 +
\int_M fe^{\eta \Gamma} e^{\underline{H}}H_1+
\int_M (-\alpha+\Theta)H_1\leq 0.}
Observe that
\equ{\int_M fe^{\eta \Gamma} G(\mathcal{H})H_1=\int_M fe^{\eta \Gamma}e^{\underline{H}}H_1.}
From the above calculations we deduce
\equ{\int_M |\nabla_g H_1|^2
\leq 0,}
hence $H_1\equiv C$ for some constant $C$. If $C>0$, necessarily $C\equiv H_1\equiv \underline{H}-\mathcal{H}$ almost everywhere. Thus, $\underline{H}=\mathcal{H}+C$, and  (\ref{Eqenergy}) traduces into
\equ{\int_M \nabla_g \underline{H}\cdot \nabla_g \varphi +
\int_M fe^{\eta \Gamma} e^{\underline{H}}\varphi+
\int_M (-\alpha+\Theta)\varphi=0,}
for all $\varphi\in C^\infty(M)$, which contradicts the fact that $\underline{H}$ solves
\equ{-\Delta_g \underline{H}+f\underline{H}=1,}
or in other words, the fact that $\underline{H}$ is not a solution of  problem (\ref{eqReg}). Hence $H_1\equiv 0$, which implies $\underline{H}\leq\mathcal{H}$.
In a similar way, we find $\mathcal{H} \le \overline{H}$ and hence
\equ{\underline{H}(x)\leq \mathcal{H}(x)\leq \overline{H}(x),\quad \hbox{a.e. } x\in M.}
Note that
\eq{\int_M \nabla_g \mathcal{H}\cdot \nabla_g \varphi+
\int_M fe^{\eta \Gamma} e^{\mathcal{H}}\varphi+
\int_M (-\alpha+\Theta)\varphi=0,\label{VariacionalH}}
for all $\varphi\in C^\infty(M,g)$.
Besides, since the functional $E$ is strictly convex and coercive, we conclude that $\mathcal{H}$ is the unique minimizer in $X$.

\medskip
So far we have proven that
Problem (\ref{ProbLim}) has a unique solution $G$ which is smooth away from the singularity point $p$ and in local conformal coordinates around $p$ it has the form
\equ{G(x)=\eta\left[-4\log |x|-2\log \left(\frac{1}{\sqrt{2}}\log \frac{1}{|x|} \right)\right]+\mathcal{H}(x),}
where $\mathcal{H}\in X\cap L^{\infty}(M,g)$, is the unique minimizer of the functional $E$ defined in $X$ by (\ref{energ}).

\medskip
  Next we will further study the form of $\mathcal{H}$ near $p$,  which in particular yields its continuity at $p$. For this purpose we use local conformal coordinates around $p$.

\medskip
Let us consider the problem
\equ{\left\lbrace
\begin{array}{rcll}
-\Delta_g \mathcal{J} &=&\alpha&\mbox{in }B(0,\frac\gamma2),\\
\mathcal{J}&=&\mathcal{H}&\mbox{on }\partial B(0,\frac\gamma2).
\end{array}\right.}
This problem has a unique solution $\mathcal{J}$, which is smooth in $B(0,\frac\gamma2)$. So we can expand $\mathcal{J}$ as
\equ{\mathcal{J}=\sum_{k=0}^\infty b_k r^k=b_0+O(r).}
We write $\mathcal{H}=\mathcal{J}+\mathcal{F}$, therefore $\mathcal{F}$ solves
\equ{\left\lbrace
\begin{array}{rcll}
\displaystyle-\Delta_g \mathcal{F}+\frac{f}{r^4}\frac{2}{\log^2 r}e^{\mathcal{J}}e^\mathcal{F}-\frac{1}{r^2}\frac{2}{\log^2 r} &=&0&\mbox{in }B(0,\frac\gamma2),\\
\mathcal{F}&=&0&\mbox{on }\partial B(0,\frac\gamma2),
\end{array}\right.}
because $\eta \Gamma\equiv\Gamma$ in $B(0,\frac\gamma2)$. Since $\mathcal{F}\in L^2(B(0,\frac\gamma2))$ we can expand it as
\equ{\mathcal{F}(r,\theta)=\sum_{k=0}^{\infty}a_k(r)e^{ik\theta}.}
Observe that
\equ{\frac{f(x)}{r^2}=\frac{\kappa_1r^2\cos^2(\theta)+\kappa_2r^2\sin^2(\theta)+\kappa_3r^2\sin\theta\cos\theta}{r^2}+O(r)=a(\theta)+O(r),}
for $r\neq 0$. Besides, $\beta_1\leq a(\theta)\leq \beta_2$.
Thus
\equ{\frac{f(x)}{r^4}\frac{2}{\log^2 r}e^{\mathcal{J}}e^\mathcal{F}-\frac{1}{r^2}\frac{2}{\log^2 r}=\frac{1}{r^2}\frac{2}{\log^2 r}\left[(a(\theta)+O(r))e^{\mathcal{J}+\mathcal{F}}-1\right].}
Moreover, since $\mathcal{H}\in L^\infty(B(0,\frac{\gamma}{2}))$ we have $e^{\mathcal{J}+\mathcal{F}}\in L^2(B(0,\frac\gamma2))$, so
\equ{\frac{1}{r^2}\frac{2}{\log^2 r}\left[(a(\theta)+O(r))e^{\mathcal{J}+\mathcal{F}}-1\right]=\sum_{k=0}^{\infty}m_k(r)e^{ik\theta},}
where
\equ{\left|m_k(r) \right|\leq \frac{C}{r^2}\frac{1}{\log^2 r},\quad \forall k\geq 0,}
for a constant $C$ independent of $k$. Now, we study the behavior of the coefficients $a_k(r)$. For this purpose let us remember that
\equ{\Delta u(r,\theta)=\frac{\partial^2 u}{\partial r^2}+\frac{1}{r}\frac{\partial u}{\partial r}+\frac{1}{r^2}\frac{\partial^2 u}{\partial \theta^2}.}
For $k\geq 1$, we see that $a_k(r)$ satisfies the ordinary differential equation
\eq{-\frac{\partial^2 a_k}{\partial r^2}(r)-\frac{1}{r}\frac{\partial a_k}{\partial r}(r)+\frac{k^2}{r^2} a_k(r)=m_k(r),\quad 0<r<\frac{\gamma}{2},\label{eqAk}}
under the conditions
\eq{a_k\left(\frac{\gamma}{2}\right)=0,\quad a_k(r)\in L^\infty\left(\left[0,\frac{\gamma}{2}\right]\right).\label{condAk}}
We recall that the $L^\infty$-condition comes from the fact that $\mathcal{F}\in L^\infty(B(0,\frac\gamma2))$.
Let us make the change of variables $r=e^t$, $A_k(t)=a_k(e^t)$, $M_k(t)=m_k(e^t)$, so the previous problem transform into
\eq{-\frac{d^2A_k}{dt^2}(t)+k^2A_k(t)=M_k(t),\quad -\infty<t<\log \frac\gamma2,\label{a1}}
under the conditions
\eq{A_k\left(\log \frac\gamma2\right)=0,\quad A_k\in L^{\infty}\left(\left(-\infty,\log \frac\gamma2 \right]  \right).\label{a2}}
Besides, $|M_k(t)|\leq C t^{-2}$ for all $k\geq 1$. All the solutions of the homogeneous equation are given by linear combinations of $e^{kt}$ and $e^{-kt}$ and a particular solution $A_k^{part}$ of the non-homogeneous equation (\ref{a1}) is given by the variation of parameter formula. We conclude that this problem has a solution of the form
\equ{C_1e^{kt}+C_2e^{-kt}+A_k^{part}.}
By the $L^\infty$-condition we conclude that $C_2=0$ and by the boundary condition in (\ref{a2}) we deduce $C_1=0$. This implies that the null function is the only solution of the homogeneous equation under condition (\ref{a2}). Hence, this problem has a unique solution $A_k(t)$. We claim that for a constant $C$ independent of $k$ we have
\eq{|A_k(t)|\leq C \frac{1}{k^2t^2}.\label{claim1}}
The proof of this fact is based on maximum principle: Observe that since $k^2>0$, the operator
\equ{-\frac{d^2}{dt^2}+k^2}
satisfies the weak maximum principle on bounded subsets of $(-\infty,\log\frac\gamma2]$.  Let us prove that $\phi=\frac{C_1}{k^2t^2}+\rho e^{-kt}$ is a non-negative supersolution for this problem. Observe first that since $A_k(t)$ is bounded, there exist $\tau_\rho$ such that
\equ{A_k(t)\leq\phi(t),\quad \foral t\in (-\infty,\tau_\rho].}
Besides,
\equ{\left(-\frac{d^2}{dt^2}+k^2 \right)\phi=-6C_1\frac{1}{k^2t^4}+C_1\frac{1}{t^2}\geq M_k(t),\quad \forall t\in \left(\tau_\rho,\log\frac\gamma2\right),}
where the last inequality is valid if we choose $C_1$ large enough. Observe also that $\phi(t)\geq A_k(t)$ for $t=\tau_\rho,\log\frac\gamma2$. Hence, by weak maximum principle we conclude that for all $\rho>0$
\equ{A_k(t)\leq \frac{C_1}{k^2t^2}+\rho e^{-kt},\quad \forall t\in \left(-\infty,\log \frac\gamma2\right].}
Taking the limit $\rho\rightarrow 0$ in the last expression, we conclude that $A_k(t)\leq C \frac{1}{k^2t^2}$.
Analogously, we now prove that $\varphi=-\frac{C_2}{k^2t^2}-\rho e^{-kt}$ is a non-positive subsolution for this problem. Since $A_k(t)$ is bounded, there exist $\tau_\rho$ such that
\equ{\varphi(t)\leq A_k(t),\quad \forall t\in (-\infty,\tau_\rho].}
Besides,
\equ{\left(-\frac{d^2}{dt^2}+k^2 \right)\varphi=6C_2\frac{1}{k^2t^4}-C_2\frac{1}{k^2t^2}\leq M_k(t),\quad \forall t\in \left(\tau_\rho,\log\frac\gamma2\right),}
where the last inequality is valid if we choose $C_2$ large enough. Observe also that $\varphi(t)\leq A_k(t)$ for $t=\tau_\rho,\log\frac\gamma2$. Hence, by weak maximum principle we conclude that for all $\rho>0$
\equ{-\frac{C_2}{k^2t^2}-\rho e^{-kt}\leq A_k(t),\quad \forall t\in \left(-\infty,\log \frac\gamma2\right].}
Taking the limit $\rho\rightarrow 0$ in the last expression, we conclude (\ref{claim1}).
Finally, coming back to the variable $r$ we conclude that there exist a unique solution $a_k(r)$ of problem (\ref{eqAk})-(\ref{condAk}), and for a constant $C$ independent of $k$ we have
\equ{|a_k(r)|\leq C\frac{1}{k^2\log^2r},\quad 0<r<\frac\gamma2.}
Now we deal with $a_0(r)$. Observe that
\equ{e^\mathcal{F}=e^{a_0(r)}\left(1+O\left(\frac{1}{\log^2 r}  \right)  \right),\quad e^{\mathcal{J}}=e^{b_0}(1+O(r)),}
and
\equ{a(\theta)=\alpha_0+\sum_{k=1}^{\infty}\alpha_ke^{ik\theta},\quad \mbox{with }\alpha_0>0,}
so we conclude that $a_0(r)$ satisfies the ordinary differential equation
\equ{-\frac{\partial^2 a_0(r)}{\partial r^2}-\frac{1}{r}\frac{\partial a_0(r)}{\partial r}+2\frac{\alpha_0e^{b_0}e^{a_0(r)}-1}{r^2\log^2 r}=O\left(\frac{1}{r^2\log^4r}\right),}
under the following conditions
\equ{a_0\left(\frac{\gamma}{2}\right)=0,\quad a_0\in L^\infty\left(\left[0,\frac\gamma2\right]\right).}
We make the change of variables $r=e^t$, $\tilde{a}_0(t)=a_0(e^t)$, so the previous problem transform into
\eq{-\frac{d^2 \tilde{a_0}}{dt^2}+2\frac{\alpha_0e^{b_0}e^{\tilde{a}_0}-1}{t^2}=O\left(\frac{1}{t^4}\right),\label{eqa_0}}
under the conditions
\eq{\tilde{a}_0\left(\log \frac{\gamma}{2}\right)=0,\quad \tilde{a}_0\in L^\infty\left( \left(-\infty,\log \frac{\gamma}{2}\right]\right).\label{eqa_1}}
The $L^\infty$-condition implies that there exist a sequence $t_n\rightarrow -\infty$ such that
\equ{\tilde{a}_0(t_n)\rightarrow L,\quad \mbox{as }n\rightarrow\infty,}
where $L=-\log (\alpha_0 e^{b_0})$. If not there exist $M<0$ such that
\equ{|\alpha_0e^{b_0}e^{\tilde{a}_0}-1|\geq\epsilon >0,\quad \forall t<M,}
which means that
\equ{\left|\frac{d^2 \tilde{a_0}}{dt^2}\right|\geq C\frac{\epsilon}{t^2}, \quad \forall t<M.}
Thus
\equ{|\tilde{a}_0|\geq C\epsilon \log |t|, \quad \forall t<M,}
so $\tilde{a}_0$ is unbounded, a contradiction.

We claim that the problem (\ref{eqa_0}), (\ref{eqa_1}) has at most one solution. In fact, let us suppose by contradiction that $u_1$ and $u_2$ are two diferent solutions. We define $u=u_1-u_2$, which satisfies the problem
\equ{-\frac{d^2u}{dt^2}+2\alpha_0e^{b_0}c(t)u=0,}
under the conditions,
\equ{u\left(\log \frac{\gamma}{2}\right)=0,\quad u\in L^\infty\left( \left(-\infty,\log \frac{\gamma}{2}\right]\right),}
and where
\equ{c(t)=\left\lbrace
\begin{array}{cr}
0&\mbox{if }u_1(t)=u_2(t),\\
\frac{1}{t^2}\frac{e^{u_1(t)-u_2(t)}}{u_1(t)-u_2(t)}&\mbox{if }u_1(t)\neq u_2(t).
\end{array}
\right.}
Observe that $c(t)\geq 0$, so we can apply the strong maximum principle in bounded domains for this problem. Moreover, from the $L^\infty$ condition we deduce that there exists a sequence $t_n$ such that $u(t_n)\rightarrow 0$ as $n\rightarrow \infty$ (the proof of this fact is the same that we gave before). From this two facts, we deduce easily that $u_1\equiv u_2$.

Let us make the change of variables $-t=e^s$, $A_0(s)=\tilde{a}_0(-e^s)$, so the previous problem transform into
\eq{\label{prA0}-\frac{d^2 A_0}{ds^2}+\frac{dA_0}{ds}+2(\alpha_0e^{b_0}e^{A_0}-1)=O(e^{-2s}),}
under the conditions
\equ{A_0\left(\log \left(-\log\frac{\gamma}{2}\right)\right)=0,\quad A_0\in L^\infty\left(\left[\log \left(-\log\frac{\gamma}{2}\right),\infty\right)\right).}
We look for a solution of this problem of the form $A_0(s)=L+\phi(s)$, so $\phi$ solves the differential equation
\equ{-\frac{d^2\phi}{ds^2}+\frac{d\phi}{ds}+2\phi=N(\phi)+O(e^{-2s}),}
where
\equ{N(\phi)=-2(e^{\phi}-\phi-1).}
Observe that $\phi_+=e^{2s}$, $\phi_-=e^{-s}$ are two linear independent solutions of the homogeneous equation.

From the previous analysis, we deduce that there exists a sequence $s_n\rightarrow \infty$ such that $\phi(s_n)=\delta_n \rightarrow 0$, as $n\rightarrow \infty$. We make the change of variables $\tilde{\phi}_n(\tau_n)=\phi(s)-\delta_n \phi_-(\tau_n)$, where $\tau_n=s-s_n$, so $\tilde{\phi}_n\in L^{\infty}$ solves the problem
\eq{\label{fixed}\left\lbrace
\begin{array}{rcll}
-\tilde{\phi}_n''+\tilde{\phi}_n'+2\tilde{\phi}_n&=&N(\tilde{\phi}_n+\delta e^{-\tau_n})+e^{-2s_n}O(e^{-2\tau_n})&\tau_n\in(0,\infty),\\
\tilde{\phi}_n(0)&=&0.&
\end{array}\right.}

Let us study the linear problem
\equ{\left\lbrace
\begin{array}{rcll}
-\varphi''+\varphi'+2\varphi&=&\omega&\mbox{in }(0,\infty),\\
\varphi(0)&=&0,&\varphi\in L^\infty(0,\infty)
\end{array}\right.}
for $\omega\in C([0,\infty))$ given. This problem has an explicit and unique solution $\varphi=T[g]$, in fact
\equ{\varphi(t)=C_1e^{\lambda_+t}+C_2e^{\lambda_-t}+e^{\lambda_+t}\int_0^t\frac{e^{\lambda_-s}\omega(s)}{3e^{2s}}ds-e^{\lambda_-t}\int_0^t\frac{e^{\lambda_+s}\omega(s)}{3e^{2s}}ds}
and we deduce that $C_1=0$ and $C_2=0$ due to the $L^\infty$ condition and the value at $0$ of $\varphi$, respectively. Problem (\ref{fixed}) can be written as \eq{\label{fix2}\tilde{\phi}_n=T[N(\tilde{\phi}_n+\delta e^{-\tau_n})+e^{-2s_n}O(e^{-2\tau_n})]:=A[\tilde{\phi}_n].}
We consider the set \equ{B_\epsilon=\left\{\varphi \in C([0,\infty))\,:\,\|\varphi\|_\infty\leq \epsilon  \right\}.} It is easy to see that if $s_n$ is large enough and $\delta_n$ small enough we have
\equ{\|A[\tilde{\phi}^1_n]-A[\tilde{\phi}^2_n]\|_\infty \leq C\epsilon \|\tilde{\phi}^1_n-\tilde{\phi}^2_n\|,}
\equ{\|A[\tilde{\phi}_n]\|\leq C\epsilon,}
and where $C$ is independent of $n$. It follows that for all sufficiently small $\epsilon$ we get that $A$ is a contraction mapping of $B_{\epsilon}$ (provided $n$ large enough), and therefore a unique fixed point of $A$ exists in this region. We deduce that there exists a unique solution $A_0$ of problem (\ref{prA0}), and it has the form $A_0(s)=L+\phi(s)$, where $L$ is a fixed constant, and $\phi(s)\rightarrow 0$ as $s\rightarrow \infty$. This concludes the proof of Lemma \ref{lema2}.
\end{proof}
\section{ Construction of a first approximation}\label{SecAprox}
In this section we will build a suitable approximation for a solution of Problem (\ref{problem}) which is large exactly near the point $p$.
The ``basic cells'' for the construction of an approximate solution of problem (\ref{problem}) are the radially symmetric solutions of the problem
\eq{\label{BasicCell}\left\lbrace
\begin{array}{rcll}
\Delta w+\lambda^2 e^w&=&0&\mbox{in }\R^2,\\
w(x)&\rightarrow&0&\mbox{as }|x|\rightarrow \infty.
\end{array}\right.}
which are given by the one-parameter family of functions
\equ{w_\delta(|x|)=\log \frac{8\delta^2}{(\lambda^2\delta^2+|x|^2)^2},}
where $\delta$ is any positive number. We define $\varepsilon=\lambda \delta$. In order to construct the approximate solution we consider the equation
\eq{\Delta F-\frac{\delta^2}{r^2}e^F=0,\label{eqAprox}}
in the variable $r=|x|/\varepsilon$ and we look for a radial solution $F=F(r)$, away from $r=0$. For this purpose we solve Problem (\ref{eqAprox}) under the following initial conditions
\equ{F(1/\delta)=0,\quad F'(1/\delta)=0.}
We make the change of variables $r=e^t$, $V(t)=F(r)$, so that equation \eqref{eqAprox} transforms into
\equ{V''-\delta^2e^V=0.}
We consider the transformation $V(s)=\tilde{V}(\delta s)$, so $\tilde{V}$ solves problem
\equ{\tilde{V}''-e^{\tilde{V}}=0,\quad \tilde{V}(\delta |\log \delta|)=0,\quad \tilde{V}'(\delta |\log \delta|)=0.}
This problem has a unique regular solution, which blows-up at some finite radius $\gamma>0$. Coming back to the variable $r=|x|/\varepsilon$, we conclude that the solution $F(r)$ is defined for all $1/\delta\le r \le Ce^{1/\delta}=C/\lambda$, for some constant $C$. Besides, we extend by $0$ the function $F$ for $r\in[0,1/\delta)$, which means $F(r)=0,$ for all $r\in[0,1/\delta)$ and we denote by $\tilde{F}(|x|)=F(|x|/\varepsilon)$. A first local approximation of the solution, in local conformal coordinates around $p$, is given by the radial function $u_\varepsilon(x)=w_\delta(|x|)+\tilde F(|x|)$.

In order to build a global approximation, let us consider $\eta$ a smooth radial cutoff function such that $\eta(r)=1$ if $r\leq C_1\delta$ and $\eta(r)=0$ if $r\geq C_2\delta$, for constants $0<C_1<C_2$. We consider as initial approximation $U_\varepsilon=\eta u_\varepsilon +(1-\eta)G$, where $G$ is the Green function that we built in the previous section. In order to have a good approximation around $p$ we have to adjust the parameter $\delta$. The good choice of this number is
\equ{\log 8\delta^2=-2\log\left(\frac{1}{\sqrt 2}\log \frac{1}{\lambda}\right)+\mathcal{H}(p),}
where $\mathcal{H}$ is defined in Section \ref{S3}.
With this choice of the parameter $\delta$, the function $u_\varepsilon$ is approaching the Green function $G$ around $p$.

A useful observation is that $u$ satisfies problem (\ref{problem})
if and only if
\equ{v(y)=u(\varepsilon y)+4\log \lambda +2\log\delta}
satisfies
\eq{\Delta_g v-\lambda^{-2}f(\varepsilon y)e^{v}+e^{v}+\varepsilon^2\alpha=0,\quad y\in M_\varepsilon,\label{probEscalado}}
where $M_\varepsilon=\varepsilon^{-1}M$.

We denote in what follows $p'=\varepsilon^{-1}p$ and
\equ{\tilde{U}_\varepsilon(y)=U_\varepsilon(\varepsilon y)+4\log \lambda +2\log \delta,}
for $y\in M_\varepsilon$. This means precisely in local conformal coordinates around $p$ that
\begin{align*}
\tilde{U}_\varepsilon(y)=&\eta(\varepsilon|y|)\left(\log \frac{1}{(1+|y|^2)^2}+\tilde{F}(\varepsilon |y|)\right)\\
&+(1-\eta(\varepsilon|y|))\left(G(\varepsilon y)+4\log\lambda + 2\log\delta \right).
\end{align*}
Let us consider a vector $k\in \R^2$. We recall that $w_\delta(|x-k|)$ is also a solution of problem (\ref{BasicCell}). In order to solve problem (\ref{probEscalado}), we need to modify the first approximation of the solution, in order to have a new parameter related to translations. More precisely, we consider for $|k|\ll 1$ the new first approximation of the solution (in the expanded variable)
\begin{align*}
V_\varepsilon(y)=&\eta(\varepsilon|y|)\left(\log \frac{1}{(1+|y-k|^2)^2}+\tilde{F}(\varepsilon |y|)\right)\\
&+(1-\eta(\varepsilon|y|))\left(G(\varepsilon y)+4\log\lambda + 2\log\delta \right).
\end{align*}
We will denote by $v_\varepsilon$ the first approximation of the solution in the original variable, which means
\equ{v_\varepsilon(x)=\eta(|x|)\left(\log \frac{8\delta^2}{(\varepsilon^2+|x-\varepsilon k|^2)^2}+\tilde{F}(|x|)\right)+(1-\eta(|x|))G(x).}


Hereafter we look for a solution of problem (\ref{probEscalado}) of the form $v(y)=V_\varepsilon(y)+\phi(y)$, where $\phi$ represent a lower order correction. In terms of $\phi$, problem (\ref{probEscalado}) now reads
\eq{\label{ProbPhi} L(\phi)=N(\phi)+E,\quad\mbox{in }M_\varepsilon, }
where
\begin{align*}
L(\phi)\colonequals&\Delta_g \phi -\lambda^{-2}f(\varepsilon y)e^{V_\varepsilon}\phi+e^{V_\varepsilon}\phi,\\
N(\phi)\colonequals&\lambda^{-2}f(\varepsilon y)e^{V_\varepsilon}(e^\phi-1-\phi)-e^{V_\varepsilon}(e^\phi-1-\phi),\\
E\colonequals&-(\Delta_g V_\varepsilon-\lambda^{-2}f(\varepsilon y)e^{V_\varepsilon}+e^{V_\varepsilon}+\varepsilon^2\alpha).
\end{align*}


\section{The linearized operator around the first approximation}
In this section we will develop a solvability theory for the second-order linear operator $L$ defined in (\ref{ProbPhi}) under suitable orthogonality conditions. Using local conformal coordinates around $p'$, then formally the operator $L$ approaches, as $\varepsilon,|k|\rightarrow 0$, the operator in $\R^2$
\equ{\mathcal{L}(\phi)=\Delta \phi + \frac{8}{(1+|z|^2)^2}\phi,}
namely, equation $\Delta w+e^w=0$ linearized around the radial solution $w(z)=\log \frac{8}{(1+|z|^2)^2}$. An important fact to develop a satisfactory solvability theory for the operator $L$ is the non-degeneracy of $w$ modulo the natural invariance of the equation under dilations and translations. Thus we set
\begin{align}
z_0(z)&=\frac{\partial}{\partial s}[w(sz)+2\log s]|_{s=1},\\
z_i(z)&=\frac{\partial}{\partial \zeta_i}w(z+\zeta)|_{\zeta=0},\quad i=1,2.
\end{align}
It turns out that the only bounded solutions of $\mathcal{L}(\phi)=0$ in $\R^2$ are precisely the linear combinations of the $z_i$, $i=0,1,2$, see \cite{f} for a proof. We define for $i=0,1,2$,
\equ{Z_i(y)=z_i(y-k).}
Additionally, let us consider $R_0$ a large but fixed number $R_0>0$ and $\chi$ a radial and smooth cut-off function such that $\chi\equiv 1$ in $B(k,R_0)$ and $\chi\equiv 0$ in $B(k,R_0+1)^c$.

Given $h$ of class $C^{0,\beta}(M_\varepsilon)$, we consider the linear problem of finding a function $\phi$ such that for certain scalars $c_i,\,i=1,2$, one has
\eq{\left\lbrace
\begin{array}{rcll}
L(\phi)&=&h+\sum_{i=1}^{2}c_i\chi Z_i&\mbox{in }M_\varepsilon,\\
\displaystyle \int_{M_\varepsilon} \chi Z_i \phi&=&0&\mbox{for }i=1,2.
\end{array}\right.\label{ProbPhi2}}
We will establish a priori estimates for this problem. To this end we define, given a fixed number $0<\sigma<1$, the norm
\eq{\|h\|_{*} = \|h\|_{*,p}\colonequals \sup_{ M_\varepsilon} ( \max\{\varepsilon^2, |y|^{-2-\sigma} \} )^{-1} |h|. \label{norma} }
Here the expression  $\max\{\varepsilon^2, |y|^{-2-\sigma} \}$ is regarded in local conformal coordinates around $p'= \ve^{-1}p$. Since local coordinates are defined up to distance $\sim \frac 1\ve $ that expression  makes sense globally in $M_\ve$.

Our purpose in this section is to prove the following result.
\begin{prop}{\label{PropLin}} There exist positive numbers $\varepsilon_0,C$ such that for any $h\in C^{0,\beta}(M_\varepsilon)$, with $\| h\|_{*}< \infty$ and for all k such that $|k|\leq C\lambda/\delta$, there is a unique solution $\phi=T(h)\in C^{2,\beta}(M_\varepsilon)$ of problem (\ref{ProbPhi2}) for all $\varepsilon<\varepsilon_0$, which defines a linear operator of $h$. Besides,
\eq{\|T(h)\|_\infty\leq C\log\left(\frac{1}{\varepsilon}\right)\|h\|_{*}.\label{propp}}
\end{prop}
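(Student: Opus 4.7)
The plan is to prove \eqref{propp} first by a contradiction-compactness argument, and then deduce existence from the a priori bound via a Fredholm alternative on a weighted Hilbert space.

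For the a priori estimate, suppose by contradiction that there exist sequences $\varepsilon_n\to 0$, vectors $k_n$ with $|k_n|\leq C\lambda_n/\delta_n$, $h_n\in C^{0,\beta}(M_{\varepsilon_n})$, scalars $c_i^n$, and solutions $\phi_n$ of \eqref{ProbPhi2} with $\|\phi_n\|_\infty=1$ but $\log(1/\varepsilon_n)\|h_n\|_*\to 0$. The first step is to control the Lagrange multipliers: testing the equation against $\chi Z_j$ and integrating by parts, the dominant left-hand side is $\int_{M_{\varepsilon_n}}\phi_n L(\chi Z_j)$, which is small because $\mathcal{L}(z_j)=0$, $V_\varepsilon$ is close to $w(\cdot-k)$ in the bubble, and the commutator $L(\chi Z_j)-\chi\,\mathcal{L}(Z_j)$ is supported in the annulus where $\chi'\neq 0$. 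Combined with the nondegeneracy of the $2\times 2$ Gram matrix $(\int_{M_\varepsilon}\chi Z_i Z_j)_{i,j=1,2}$, this yields $c_i^n=o(1)$.

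Next I perform an inner-outer analysis. On compact subsets of $\R^2$ the translated functions $\tilde\phi_n(z)\colonequals \phi_n(k_n+z)$ converge in $C^{2,\beta}_{loc}$ to a bounded $\phi^*\in L^\infty(\R^2)$ solving $\mathcal{L}(\phi^*)=0$; by the classification of \cite{f}, $\phi^*=\sum_{i=0}^2 a_i z_i$, and the orthogonality conditions pass to the limit to give $a_1=a_2=0$. To rule out $a_0\neq 0$, I would turn to the outer region $|y-k_n|\geq R_0$, where $e^{V_\varepsilon}$ decays like $|y-k|^{-4}$ and the zero-order coefficient $\lambda^{-2}f(\varepsilon y)e^{V_\varepsilon}$ stays controlled. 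Using the pointwise bound $|h_n(y)|\leq \|h_n\|_*\max\{\varepsilon_n^2,|y|^{-2-\sigma}\}$ and a barrier built from the Green's function of $-\Delta_g+\alpha$ pulled back to $M_\varepsilon$ (whose logarithmic singularity is precisely what produces the $\log(1/\varepsilon_n)$ factor), one obtains $|\phi_n(y)|\leq C\log(1/\varepsilon_n)\|h_n\|_*\to 0$ for $|y-k_n|\geq R_0$. Since $z_0(z)=\frac{2(1-|z|^2)}{1+|z|^2}\to -2$ as $|z|\to\infty$, matching at $|y-k_n|=R$ with $R\to\infty$ forces $a_0=0$, hence $\phi^*\equiv 0$. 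But then $\|\phi_n\|_\infty=1$ must be attained at points escaping every bounded neighborhood of $k_n$, contradicting the outer bound.

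With \eqref{propp} in hand, existence follows from a standard Fredholm argument: problem \eqref{ProbPhi2} can be recast as $\phi=\mathcal{K}\phi+\tilde h$ on the Hilbert subspace of $H^1(M_\varepsilon)$ defined by the two orthogonality conditions, with $\mathcal{K}$ compact thanks to the rapid decay of $e^{V_\varepsilon}$ and Rellich's theorem on a sufficiently large piece of $M_\varepsilon$. Injectivity is guaranteed by the a priori estimate, hence surjectivity, and elliptic regularity promotes the solution to $C^{2,\beta}$. The main obstacle is the construction of the outer barrier and its matching with the inner limit; the logarithmic loss in \eqref{propp} is genuine because only two of the three bounded kernel directions of $\mathcal{L}$ are killed by the orthogonality, so the exterior estimate must absorb a logarithmic contribution associated with the unconstrained dilation mode $Z_0$.
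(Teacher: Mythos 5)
Your overall architecture---contradiction/compactness for the a priori bound, followed by a Fredholm alternative on a weighted $H^1$ subspace for existence---is the same as the paper's, and your inner analysis (translated sequence converges to a bounded kernel element of $\mathcal{L}$, classified via \cite{f}, with $a_1=a_2=0$ forced by the orthogonality) is exactly what the paper does in its STEP~2.

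The gap is the outer estimate. You assert that a barrier built from the pullback of the Green's function of $-\Delta_g+\alpha$ gives $|\phi_n(y)|\leq C\log(1/\ve_n)\|h_n\|_*$ on the whole region $|y-k_n|\geq R_0$, but this is not true as stated. The zero-order coefficient of $L$ contains $+e^{V_\ve}\phi$ with the wrong sign for comparison; a supersolution $\psi$ must satisfy $\Delta\psi+e^{V_\ve}\psi-\lambda^{-2}f(\ve y)e^{V_\ve}\psi\leq -|h_n|$, and the favorable term $-\lambda^{-2}f e^{V_\ve}\psi$ begins to dominate only once $\lambda^{-2}f(\ve y)\gtrsim 1$, i.e.\ for $|y|\gtrsim 1/\delta$. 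The pullback of the Green's function is nearly harmonic in the intermediate annulus $R_0\lesssim|y-k_n|\lesssim 1/\delta$ and so fails the supersolution inequality there. Moreover, even granting a barrier, its inner boundary value on $|y-k_n|=R_0$ is of size $a_0 z_0(R_0)$, which you have not yet shown to vanish; so the outer smallness you invoke presupposes the very conclusion $a_0=0$ that the matching is supposed to deliver. The paper gets around this with a different, concrete construction: it first proves a weak maximum principle for $L$ on $R\leq|y-p'|\leq\ve^{-1}\gamma/2$ using the comparison function $Z(y)=z_0(a|y-p'|)$, $z_0(r)=(r^2-1)/(r^2+1)$, whose rescaled Laplacian beats $e^{V_\ve}Z$; it then derives a far-outer vanishing $\phi_n\to 0$ only for $|y-p'|\gtrsim\ve^{-1}$ by passing to the original scale and comparing against the logarithmic solution of $\Delta_g\Phi-\min\{fe^{J},1\}\Phi=-\delta_p$ (STEP~1); and it closes the intermediate annulus in STEP~3 by building an explicit radial supersolution $\psi_n$ with boundary data $C_0$ on the inner circle and $o(1)$ on the outer one, obtaining a contradiction from the mismatch between the large slope $|\psi_n'(\rho)|\gtrsim\ve_n^{-1}$ and the bounded slope of the inner limit $C_0Z_0$. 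Your matching observation $z_0(r)\to -2$ is in principle a cleaner way to kill $a_0$ than the paper's derivative argument, but it cannot be decoupled from the outer bound; until a valid supersolution down to $|y-k_n|=R_0$ is produced, the argument is circular. (A smaller structural difference: the paper proves the a priori estimate first for $L\phi=h$ without projections in a separate Lemma and then bounds $|c_i|\leq C\|h\|_*$ by testing against $\eta Z_i$ with a cutoff at scale $\ve^{-1/2}$, whereas you fold $c_i^n=o(1)$ into the contradiction; both are viable.)
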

Observe that the orthogonality conditions in problem (\ref{ProbPhi2}) are only taken respect to the elements of the approximate kernel due to translations.

The next Lemma will be used for the proof of Proposition \ref{PropLin}.
We obtain an a priori estimate for the problem
\eq{\left\lbrace
\begin{array}{rcll}
L(\phi)&=&h&\mbox{in }M_\varepsilon,\\
\displaystyle \int_{M_\varepsilon} \chi Z_i \phi&=&0&\mbox{for }i=1,2.
\end{array}\right.\label{ProbPhi3}}
We have the following estimate.
\begin{lema}
There exist positive constants $\varepsilon_0,\,C$ such that for any $\phi$ solution of problem (\ref{ProbPhi3}) with $h\in C^{0,\beta}(M_\varepsilon)$, $\|h\|_*<\infty$ and any k, $|k|\leq C\lambda/\delta$
\equ{\|\phi\|_\infty \leq C\log\left(\frac{1}{\varepsilon} \right) \|h\|_*,}
for all $\varepsilon<\varepsilon_0$.
\end{lema}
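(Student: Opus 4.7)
The plan is a standard contradiction / blow-up argument combined with a barrier construction in the outer region. Suppose the conclusion fails; then, after rescaling, there exist sequences $\varepsilon_n\to 0$, $k_n$ with $|k_n|\le C\lambda_n/\delta_n$, right-hand sides $h_n$, and solutions $\phi_n$ of (\ref{ProbPhi3}) with
\equ{\|\phi_n\|_\infty=1,\qquad \log(1/\varepsilon_n)\,\|h_n\|_*\longrightarrow 0.}
I would then analyse $\phi_n$ separately in an inner region $B(k_n,R)$ around the translated bubble and in the complementary outer region of $M_{\varepsilon_n}$, and couple the two to reach a contradiction.

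\textbf{Inner step.} In local conformal coordinates around $\varepsilon_n^{-1}p$, the coefficients of $L$ converge locally uniformly to those of the Liouville linearisation
\equ{\mathcal{L}\phi=\Delta\phi+\frac{8}{(1+|z-k_n|^2)^2}\phi,}
because $e^{V_{\varepsilon_n}}\to 8(1+|z-k_n|^2)^{-2}$ while $\lambda_n^{-2}f(\varepsilon_n y)e^{V_{\varepsilon_n}}=O(\delta_n^2|z|^2(1+|z-k_n|^2)^{-2})\to 0$ on compact sets. Interior Schauder estimates and a diagonal extraction yield, along a subsequence, $\phi_n\to\phi_\infty$ in $C^1_{\mathrm{loc}}(\R^2)$ with $\phi_\infty$ bounded and $\mathcal{L}\phi_\infty=0$. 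By the classification of \cite{f}, $\phi_\infty=\alpha_0 z_0+\alpha_1 z_1+\alpha_2 z_2$. Passing the orthogonality conditions $\int\chi Z_i\phi_n=0$ $(i=1,2)$ to the limit, and using parity, forces $\alpha_1=\alpha_2=0$, so $\phi_\infty=\alpha_0 z_0$.

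\textbf{Outer step.} Outside $B(k_n,R)$ both potentials in $L$ are small and the operator is a controlled perturbation of $\Delta_g$. The key is to build a positive barrier $\psi$ satisfying
\equ{L(\psi)\le -C\bigl(\varepsilon_n^2+|y|^{-2-\sigma}\bigr),\qquad \psi\ge 1\text{ on }\partial B(k_n,R),\qquad \|\psi\|_\infty\le C\log(1/\varepsilon_n),}
obtained by adding a decaying piece $|y|^{-\sigma}$ (handling the $|y|^{-2-\sigma}$ part of $\|h\|_*$ via $\Delta|y|^{-\sigma}=-\sigma^2|y|^{-2-\sigma}$) to a logarithmic piece modelled on the Green's function of $\Delta_g$ on the compact rescaled manifold $M_{\varepsilon_n}$: the $\log(1/\varepsilon_n)$-growth of the 2D fundamental solution integrated over a domain of size $\sim 1/\varepsilon_n$ produces exactly the announced logarithmic factor. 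The zero-order coefficient $-\lambda_n^{-2}f(\varepsilon_n y)e^{V_{\varepsilon_n}}$ of $L$ is nonpositive outside the bubble, so the comparison principle applies and gives
\equ{|\phi_n(y)|\le C\log(1/\varepsilon_n)\,\|h_n\|_*+C\sup_{\partial B(k_n,R)}|\phi_n|\quad\text{for }|y-k_n|\ge R.}

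\textbf{Closing the contradiction and main obstacle.} Taking $n\to\infty$ at fixed $R$ in the outer estimate, the first term vanishes and the second tends to $\sup_{\partial B(0,R)}|\alpha_0 z_0|\le 2|\alpha_0|+o_R(1)$. To rule out $\alpha_0\ne 0$, I would project $L(\phi_n)=h_n$ onto the approximate kernel element $\chi Z_0$: integration by parts combined with the expansion of $L(\chi Z_0)$ using $\mathcal{L}z_0=0$ isolates a leading contribution coming from the perturbation $-\lambda_n^{-2}f(\varepsilon_n y)e^{V_{\varepsilon_n}}z_0$, whose pairing with $\phi_\infty=\alpha_0 z_0$ is a nonzero multiple of $\alpha_0$ at order $\delta_n^2$. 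Matching this identity against $|\int h_n\chi Z_0|\le C\|h_n\|_*\to 0$ forces $\alpha_0=0$; then $\phi_\infty\equiv 0$, and the outer bound propagates this to $\|\phi_n\|_\infty\to 0$, contradicting the normalisation. The main obstacle is precisely this projection step together with the sharp constant in the outer barrier: one has to verify that the limit kernel element $z_0$ is genuinely broken at scale $\varepsilon_n$ by the non-degenerate minimum of $f$ at $p$, producing a nonzero solvability coefficient for the $z_0$-mode, and simultaneously to construct a barrier on $M_{\varepsilon_n}$ whose $L^\infty$-norm grows no faster than $\log(1/\varepsilon_n)$ against a uniformly distributed right-hand side of size $\varepsilon_n^2$.
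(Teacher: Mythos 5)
Your contradiction/blow-up framework and your inner step (convergence of $\phi_n$ near the bubble, after translation, to a bounded solution of $\mathcal{L}\phi_\infty=0$, then using \cite{f} and the orthogonality conditions to reduce to $\phi_\infty=\alpha_0 z_0$) coincide with the paper's Steps~1--2. But the paper rules out $\alpha_0\neq0$ by a completely different mechanism: it constructs an explicit radial supersolution $\psi_n$ on the annulus $B(k_n,\varepsilon_n^{-1}\gamma/2)\setminus B(k_n,\rho)$ solving $-\Delta\psi_n - e^{V_\varepsilon}\psi_n=1$ with boundary values $C_0$ at $r=\rho$ and $o(1)$ at the outer edge; the particular solution forces $|\partial_r\psi_n(\rho)|\gtrsim\varepsilon_n^{-1}$, and since the maximum principle (proved via the comparison function $z_0(a|y-p'|)$) gives $\phi_n\leq\psi_n$ outside $\rho$ while $\phi_n\to C_0Z_0$ inside $\rho$ with \emph{bounded} radial derivative, one gets a derivative discontinuity --- a contradiction. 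This argument never computes a solvability coefficient for the $z_0$-mode.

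Your proposed closing step --- projecting $L(\phi_n)=h_n$ onto $\chi Z_0$ and hoping the non-degenerate minimum of $f$ produces a dominant $z_0$-solvability coefficient --- has a genuine quantitative gap. On the fixed-scale support of $\chi$, the perturbation $\lambda^{-2}f(\varepsilon y)e^{V_\varepsilon}$ is of size $O(\delta^2)$ (it behaves like $\delta^2|y|^2(1+|y|^2)^{-2}$), so the pairing with $z_0^2$ yields a coefficient of order $\delta^2\sim 1/\log^2(1/\varepsilon)$. But the hypothesis of the contradiction argument only gives $\|h_n\|_*=o(1/\log(1/\varepsilon_n))\sim o(\delta_n)$. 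Since $\delta_n^2\ll\delta_n$, the right-hand side $\langle h_n,\chi Z_0\rangle=O(\|h_n\|_*)$ can dominate the putative $\alpha_0\delta_n^2$ term, so the projection identity does not force $\alpha_0=0$. You would also have to control the cutoff commutators $\Delta\chi\cdot Z_0+2\nabla\chi\cdot\nabla Z_0$ (whose leading part cancels only after integrating against $\alpha_0 z_0$ and using self-adjointness, a step that needs to be done carefully because $z_0\to1$ at infinity) and the $e^{V_\varepsilon}$ difference between $L$ and $\mathcal{L}$. The paper's supersolution route sidesteps all of this. Similarly, your ``outer step'' barrier (a decaying piece $|y|^{-\sigma}$ plus a $\log(1/\varepsilon)$-size log piece) must also dominate the \emph{positive} zero-order term $+e^{V_\varepsilon}\psi$, which is of size $O(\varepsilon^2)$ in the outer region and works against a barrier built from superharmonics of $\Delta$ alone; the paper instead handles the outer region (Step~1) by passing to the original variable and showing the limit equation on $M\setminus\{p\}$ has only the zero bounded solution, using a comparison with the Green's function of $\Delta_g-\min\{fe^J,1\}$.

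So: same skeleton, same inner analysis, but a different and in my view not-yet-correct mechanism for the crucial $z_0$-mode elimination; the paper's supersolution/derivative-jump argument is the step you would need to replace your projection heuristic with.
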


\begin{proof}
We carry out the proof by a contradiction argument. If the above fact were false, there would exist sequences $(\varepsilon_n)_{n\in\N}$, $(k_n)_{n\in \N}$ such that $\varepsilon_n\rightarrow 0$, $|k_n|\rightarrow 0$ and functions $\phi_n$, $h_n$ with $\|\phi_n\|_\infty=1$, \equ{\log(\varepsilon_n^{-1})\|h_n\|_{*}\rightarrow 0,}
such that
\eq{\left\lbrace
\begin{array}{rcll}
L(\phi_n)&=&h_n&\mbox{in }M_{\varepsilon_n},\\
\int_{M_{\varepsilon_n}}\chi Z_i \phi_n &=&0& \mbox{for }i=1,2.
\end{array}\right.\label{Prob_n}}
A key step in the proof is the fact that the operator $L$ satisfies a weak maximum principle in regions, in local conformal coordinates around $p$, of the form $A_\varepsilon=B(p',\varepsilon^{-1}\gamma/2)\setminus B(p',R)$, with $R$ a large but fixed number. Consider the function $z_0(r)=\frac{r^2-1}{r^2+1}$, radial solution in $\R^2$ of
\equ{\Delta z_0 + \frac{8}{(1+r^2)^2}z_0=0.}
We define a comparison function
\equ{Z(y)=z_0(a|y-p'|), \quad y\in M_\varepsilon.}
Let us observe that
\equ{-\Delta Z(y)=\frac{8a^2(a^2|y-p'|^2-1)}{(1+a^2|y-p'|^2)^3}.}
So, for $100a^{-2}<|y-p'|<\varepsilon^{-1}\gamma/2$, we have
\equ{-\Delta Z(y)\geq 2 \frac{a^2}{(1+a^2|y-p'|^2)^2}\geq \frac{a^{-2}}{|y-p'|^4}.}
On the other hand, in the same region,
\equ{e^{V_\varepsilon(y)} Z(y)\leq C\frac{1}{|y-p'|^4}.}
Hence if $a$ is taken small and fixed, and $R>0$ is chosen sufficiently large depending on this $a$, then
\equ{\Delta Z+e^{V_\varepsilon}Z<0,\quad \mbox{in }A_\varepsilon.}
Since $Z>0$ in $A_\varepsilon$, we have
\equ{L(Z)<0, \quad \mbox{in }A_\varepsilon}
We conclude that $L$ satisfies weak maximum principle in $A_\varepsilon$, namely if $L(\phi)\leq 0$ in $A_\varepsilon$ and $\phi\geq0$ on $\partial A_\varepsilon$, then $\phi\geq 0$ in $A_\varepsilon$.
\newline

We now give the proof of the Lemma in several steps.
\newline

STEP 1. We claim that
\equ{\sup_{y\in M_{\varepsilon_n}\setminus B(p/\varepsilon_n,\rho/\varepsilon_n)}|\phi_n(y)|=o(1),}
where $\rho$ is a fixed number. In fact, coming back to the original variable by the transformation
\equ{\hat{\phi}_n(x)=\phi_n\left(\frac{x}{\varepsilon_n}\right),\quad x\in M.}
We can see that $\hat{\phi}_n$ satisfies the equation
\eq{\Delta_g \hat{\phi}_n-fe^{v_{\varepsilon_n}}\hat{\phi}_n+\lambda_n^2e^{v_{\varepsilon_n}}\hat{\phi}_n=\frac{1}{\varepsilon_n^2}h_n\left(\frac{x}{\varepsilon_n}\right),\label{eqVarOriginal}}
where
\equ{v_{\varepsilon_n}(x)=V_{\varepsilon_n}\left(\frac{x}{\varepsilon_n}\right)-4\log\lambda_n-2\log\delta,}
is the approximation of the solution in the original variable. Taking $n\rightarrow \infty$, we can see that $\hat{\phi}_n$ converges uniformly over compacts of $M\setminus\{p\}$ to a function $\hat{\phi}\in H^1(M)\cap L^\infty(M)$ solution of the problem
\eq{\label{probLimit}  \Delta_g \hat{\phi}-fe^{J}\hat{\phi}=0,\quad\mbox{in }M\setminus \{p\}}
where $J$ is the limit of $v_{\varepsilon_n}$.
We claim that $\hat{\phi}\equiv 0$, in fact, we consider the unique solution $\Phi$ of the problem
\equ{\Delta_g \Phi-\min\{fe^{J},1\}\Phi=-\delta_{p},\quad\mbox{in }M.
}

 Using local conformal coordinates around $p$ we expand
 $$\Phi(x)=-\frac{1}{2\pi}\log(|x|)+H(x)
  $$
   for $H$ bounded. Since $\hat{\phi}\in L^\infty(M)$, we conclude that for all sufficiently small $\epsilon$ and $\tau$ we have
\equ{|\hat{\phi}(x)|\leq \epsilon \Phi(x),\quad x\in \partial B(0,\tau).}
Multiplying (\ref{probLimit}) by $\varphi=(\hat{\phi}-\epsilon \Phi)_+$, and integrating by parts over $M_\tau=M\setminus U_\tau$, where $U_\tau$ is the neighborhood around $p$ under the local conformal coordinates that we used, we have
\equ{\int_{M_\tau} |\nabla_g \varphi|^2+\int_{M_\tau} fe^{J}\varphi^2 +\epsilon\int_{M_\tau}e^{J}\varphi\Phi=0.}
Since $\Phi\geq 0$, we have
\equ{\int_{M_\tau} |\nabla_g \varphi|^2+\int_{M_\tau} fe^{J}\varphi^2 \leq 0.}
Hence $\varphi=(\hat{\phi}-\epsilon \Phi)_+=0$ in $M_\tau$, so $\hat{\phi}\leq \epsilon \Phi$ in $M_\tau$. Multiplying by $\varphi=(\hat{\phi}+\epsilon\Phi)_-$ and integrating by parts, we have $(\hat{\phi}+\epsilon\Phi)_-=0$, thus
\equ{|\hat{\phi}(x)|\leq \epsilon \Phi(x),\quad x\in M_\tau.}
Taking $\epsilon\rightarrow 0$ and $\tau\rightarrow 0$, we conclude that $\hat{\phi}\equiv 0$.
\newline


STEP 2. Let us consider the transformation
\equ{\tilde{\phi}_n(y)=\phi_n(y+p_n').}
Thus $\tilde{\phi}_n$ satisfies the equation
\equ{\Delta_g \tilde{\phi}_n-\lambda_n^{-2}f(\varepsilon_n y+p_n)e^{V_{\varepsilon_n}(y+p_n')}\tilde{\phi}_n+e^{V_{\varepsilon_n}(y'+p_n')}=h_n(y+p_n'),}
in $M_{\varepsilon_n}- \{p_n'\}$. Taking the limit $n\rightarrow \infty$ in the last equation (and also in problem (\ref{Prob_n})), we see that $\tilde{\phi}_n$ converges uniformly over compacts of $M_{\varepsilon_n}-\{p'_n\}$ to a bounded solution $\tilde{\phi}$ of the problem
\equ{\mathcal{L}(\tilde{\phi})=0\quad \mbox{in } \R^2,\quad \int_{\R^2}\chi Z_i\tilde{\phi}=0,\quad i=1,2.}
Hence $\tilde{\phi}(x)=C_0Z_0(x)$.

In what follows we assume without loss of generality that $C_0\geq 0$. If $C_0<0$, we work with $-\phi_n$ instead of $\phi_n$ and the following analysis is also valid.
\newline

STEP 3. In this step we will construct a non-negative supersolution in the region, in local conformal coordinates around $p_n'$, $B_n=B(k_n,\rho)\setminus B(k_n,\varepsilon_n^{-1}\gamma/2), \rho>0$, where the weak maximum principle is valid. We work first in the case $C_0>0$. Let us consider the problem
\eq{\left\lbrace
\begin{array}{rcll}
-\Delta \psi_n-e^{V_\varepsilon}\psi_n&=&1&\mbox{in }B_n,\\
\psi_n(y)&=&C_0&\mbox{on  }\partial B(k_n,\rho),\\
\psi_n(y)&=&o(1)&\mbox{on  }\partial B(k_n,\varepsilon_n^{-1}\gamma/2).
\end{array}\right.\label{supersolucion}}
We define $r=|y-k_n|$.
A direct computation shows that
\equ{\psi_n(y)=C_0Z_0(r)+C_\varepsilon Y(r)+W(r),}
where
\equ{Y(r)=Z_0\int_\rho^r \frac{1}{sZ_0^2(s)}ds,\quad W(r)=-Z_0(r)\int_{\rho}^{r}sY(s)ds+Y(r)\int_{\rho}^{r}sZ_0(s)ds,}
and \equ{C_\varepsilon=\frac{o(1)-C_0Z_0(\varepsilon_n^{-1}\gamma/2)-W(\varepsilon_n^{-1}\gamma/2)}{Y(\varepsilon_n^{-1}\gamma/2)}.}
We choose  $\rho>R$, where $R$ is the fixed minimal radio for which the weak maximum principle is valid in the region $B_n$. Observe that
\equ{L(\psi_n)=-1-\lambda^{-2}f(\varepsilon y)e^{V_\varepsilon}\psi_n\leq h_n=L(\phi_n).}
Moreover, from steps 1 and 2, we deduce that
\eq{\psi_n\geq \phi_n, \quad\mbox{on } \partial B_n,\label{Fact0}}
which means that $\psi_n$ is a supersolution for the problem
\equ{L(\phi_n)=h_n,\quad \mbox{in }B_n.}
Since $\rho>R$, we can apply the weak maximum principle and we deduce that $\Psi_n\geq \phi_n$ in $B_n$. Observe that
\eq{\left| \frac{d\psi_n(\rho)}{dr}\right|\geq \varepsilon_n^{-1}.\label{Fact1}}
In the other hand
\eq{\frac{dZ_0}{dr}=-C\frac{r}{(r^2-1)^2},\label{Fact2}}
where $C>0$ is a constant independent of $n$. Since $\phi_n$ converges over compacts of the expanded variable to the function $C_0Z_0$, we deduce from (\ref{Fact0}), (\ref{Fact1}) and (\ref{Fact2}) that the partial derivative of $\phi_n$ respect to $r$ is discontinuous at $|y-k_n|=\rho$, for large values of $n$, which is a contradiction.

In the case $C_0=0$, $\phi_n$ converges to $0$ over compacts of the expanded variable. Let us consider the problem
\equ{\left\lbrace
\begin{array}{rcll}
-\Delta \psi_n-e^{V_\varepsilon}\psi_n&=&1&\mbox{in }B_n,\\
\psi_n(y)&=&1/2&\mbox{on  }\partial B(k_n,\rho),\\
\psi_n(y)&=&o(1)&\mbox{on  }\partial B(k_n,\varepsilon_n^{-1}\gamma/2).
\end{array}\right.}
It is easy to see that $\psi_n\leq 1/2$ in $\overline{B}_n$. Using the previous maximum principle argument we deduce that $\phi_n\leq\psi_n\leq 1/2$ Applying the same argument for the problem that $-\phi_n$ satisfies, we conclude $-\phi_n\leq 1/2$. Thus,
\equ{\|\phi_n\|_\infty\leq 1/2,}
which is a contradiction with the fact $\|\phi_n\|_\infty=1$.
This finishes the proof of the a priori estimate.
\end{proof}
We are now ready to prove the main result of this section.
\begin{proof}[Proof of Proposition \ref{PropLin}]
We begin by establishing the validity of the a priori estimate (\ref{propp}). The previous lemma yields
\eq{\|\phi\|_{\infty}\leq C \log \left(\frac{1}{\varepsilon}\right)\left[ \|h\|_{*}+\sum_{i=1}^2|c_i| \right],\label{test0}}
hence it suffices to estimate the values of the constants $|c_i|,\,i=1,2$. We use local conformal coordinates around $p$, and we define again $r=|y|$ and we consider a smooth cut-off function $\eta(r)$ such that $\eta(r)=1$ for $r<\frac{1}{\sqrt{\varepsilon}}$, $\eta(r)=0$ for $r>\frac{2}{\sqrt{\varepsilon}}$, $|\eta'(r)|\leq C\sqrt{\varepsilon}$, $|\eta''(r)|\leq C\varepsilon$. We test the first equation of problem (\ref{ProbPhi2}) against $\eta Z_i$, $i=1,2$ to find
\eq{\langle L(\phi),\eta Z_i\rangle=\langle h,\eta Z_i \rangle+c_i\int_{M_\varepsilon}\chi |Z_i|^2.\label{test1}}
Observe that
\equ{\langle L(\phi),\eta Z_i\rangle=\langle \phi,L(\eta Z_i)\rangle,}
and
\equ{L(\eta Z_i)=Z_i\Delta \eta + 2\nabla \eta \cdot \nabla Z_i+\eta(\Delta Z_i+e^{V_\varepsilon}Z_i)-\eta\lambda^{-2}f(\varepsilon y)e^{V_\varepsilon}Z_i.}
We have
\equ{\eta(\Delta Z_i+e^{V_\varepsilon}Z_i)=\varepsilon O((1+r)^{-3}).}
Observe that
\equ{\lambda^{-2}f(\varepsilon y)e^{V_\varepsilon(y)}=\lambda^2\delta^2 f(x)e^{v_\varepsilon(x)},\quad \mbox{where } y=\frac{x}{\varepsilon}, x\in M,}
thus
\equ{\eta\lambda^{-2}f(\varepsilon y)e^{V_\varepsilon}Z_i=O(\varepsilon^2).}
Since $\Delta \eta=O(\varepsilon)$, $\nabla \eta= O(\sqrt \varepsilon)$, and besides $Z_i=O(r^{-1})$, $\nabla Z_i=O(r^{-2})$, we find
\equ{Z_i\Delta \eta + 2\nabla \eta \cdot \nabla Z_i=O(\varepsilon\sqrt{\varepsilon}).}
From the previous estimates we conclude that
\equ{|\langle \phi,L(\eta Z_i) \rangle|\leq C\sqrt\varepsilon \|\phi\|_\infty.}
Combining this estimate with (\ref{test0}) and (\ref{test1}) we obtain
\equ{|c_i|\leq C\left[\|h\|_*+\sqrt\varepsilon\log \frac{1}{\varepsilon} \right],}
which implies
\equ{|c_i|\leq C\|h\|_*\quad i=1,2.}
It follows from (\ref{test0}) that
\equ{\|\phi\|_\infty\leq C \log \left(\frac{1}{\varepsilon}\right)\|h\|_*,}
and the a priori estimate (\ref{propp}) has been thus proven. It only remains to prove the solvability assertion. For this purpose let us consider the space
\equ{H=\left\lbrace  \phi\in H^1(M_\varepsilon)\, : \,  \int_{M_\varepsilon}\chi Z_i\phi=0,\, i=1,2.  \right \rbrace}
endowed with the inner product,
\equ{\langle\phi,\psi\rangle=\int_{M_\varepsilon}\nabla_g \phi \nabla_g \psi+\int_{M_\varepsilon}\lambda^{-2}f(\varepsilon y)e^{V_\varepsilon}\phi\psi.}
Problem (\ref{ProbPhi2}) expressed in weak form is equivalent to that of finding $\phi\in H$ such that
\equ{\langle\phi,\psi\rangle=\int_{M_\varepsilon}\left[e^{V_\varepsilon}\phi+h+\sum_{i=1}^2 c_i \chi Z_i\right]\psi,\quad \mbox{for all }\psi\in H.}

With the aid of Riesz's representation theorem, this equation gets rewritten in $H$ in the operator form $\phi=K(\phi)+\tilde{h}$, for certain $\tilde{h}\in H$, where $K$ is  a compact operator in $H$. Fredholm's alternative guarantees unique solvability of this problem for any $h$ provided that the homogeneous equation $\phi=K(\phi)$ has only zero as solution in $H$. This last equation is equivalent to problem (\ref{ProbPhi2}) with $h\equiv 0$. Thus, existence of a unique solution follows from the a priori estimate (\ref{propp}). The proof is complete.
\end{proof}

\section{The nonlinear problem}
We recall that our goal is to solve problem (\ref{ProbPhi}). Rather than doing so directly, we shall solve fist the intermediate problem
\eq{\left\lbrace
\begin{array}{rcll}
L(\phi)&=&N(\phi)+E+\sum_{i=1}^{2}c_i\chi Z_i&\mbox{in }M_\varepsilon,\\
\displaystyle \int_{M_\varepsilon} \chi Z_i \phi&=&0&\mbox{for }i=1,2,
\end{array}\right.\label{ProbNonLin}}
using the theory developed in the previous section. We assume that the conditions in Proposition (\ref{PropLin}) hold. We have the following result
\begin{lema}\label{lemita}
Under the assumptions of Proposition (\ref{PropLin}) there exist positive number $C,\varepsilon_0$ such that problem (\ref{ProbNonLin}) has a unique solution $\phi$ which satisfies
\equ{\|\phi\|_\infty \leq C\varepsilon \log \frac{1}{\varepsilon},}
for all $\varepsilon<\varepsilon_0$.
\end{lema}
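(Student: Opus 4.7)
The plan is to apply Banach's fixed point theorem to the equivalent fixed point formulation
\equ{\phi = \mathcal{A}(\phi) \colonequals T(N(\phi)+E),}
where $T$ is the bounded linear operator supplied by Proposition \ref{PropLin}. This reduces everything to three ingredients: (a) a size estimate $\|E\|_*\leq C\varepsilon$ for the error, (b) quadratic smallness and Lipschitz bounds for the nonlinearity $N$ in the $\|\cdot\|_*$ norm, and (c) the a priori estimate $\|T(h)\|_\infty \leq C\log(1/\varepsilon)\|h\|_*$ already proved.

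For step (a), I would split $M_\varepsilon$ according to the cut-off $\eta(\varepsilon|y|)$ into the inner region ($|y-k|\lesssim\delta/\varepsilon\sim 1/|\log\lambda|\cdot\varepsilon^{-1}$), the transition annulus where $\nabla\eta\ne 0$, and the outer region. In the inner region the leading equation $\Delta w + e^w = 0$ is satisfied by the bubble $\log 8/(1+|y-k|^2)^2$ exactly, the radial correction $\tilde F$ was chosen so that $\Delta\tilde F - \lambda^{-2}f(\varepsilon y)e^{V_\varepsilon}$ cancels (modulo controlled lower order terms) the contribution coming from $f(\varepsilon y)\sim\tfrac12 D^2f(p)[\varepsilon y,\varepsilon y]$, and $\varepsilon^2\alpha$ is negligible against the weight $|y|^{-2-\sigma}$ used near $p'$. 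In the outer region $V_\varepsilon = G(\varepsilon y)+4\log\lambda + 2\log\delta$ with $G$ solving \eqref{ProbLim}, so essentially only the $e^{V_\varepsilon}$ and $\varepsilon^2\alpha$ terms survive, both of size $O(\varepsilon^2)$ which matches the weight $\varepsilon^2$. In the annular transition region the terms $\Delta_g\eta$, $\nabla_g\eta\cdot\nabla_g(\text{inner}-\text{outer})$ must be controlled; this is where the asymptotic expansion in Lemma \ref{lema2} and the precise choice $\log 8\delta^2 = -2\log(\tfrac{1}{\sqrt 2}\log\tfrac{1}{\lambda}) + \mathcal H(p)$ are crucial, since they align the inner and outer pieces up to $O(\varepsilon)$ in $C^1$, producing error $O(\varepsilon)\cdot O(\varepsilon^2)$ in this annulus, compatible with the weight.

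For step (b), on a ball $\|\phi\|_\infty\leq \varrho\ll 1$ the pointwise inequality $|e^\phi - 1 - \phi|\leq C\phi^2$ and $|(e^{\phi_1}-\phi_1)-(e^{\phi_2}-\phi_2)|\leq C(|\phi_1|+|\phi_2|)|\phi_1-\phi_2|$ together with the pointwise bounds $e^{V_\varepsilon}(y),\;\lambda^{-2}f(\varepsilon y)e^{V_\varepsilon}(y)\leq C\max\{\varepsilon^2,|y|^{-2-\sigma}\}$ (verified by checking the three regions as in (a)) immediately give
\equ{\|N(\phi)\|_*\leq C\|\phi\|_\infty^2,\qquad \|N(\phi_1)-N(\phi_2)\|_*\leq C(\|\phi_1\|_\infty+\|\phi_2\|_\infty)\|\phi_1-\phi_2\|_\infty.}

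Combining (a)--(c) we get $\|\mathcal A(\phi)\|_\infty \leq C\log(1/\varepsilon)(\|\phi\|_\infty^2+\varepsilon)$ and $\|\mathcal A(\phi_1)-\mathcal A(\phi_2)\|_\infty\leq C\log(1/\varepsilon)(\|\phi_1\|_\infty+\|\phi_2\|_\infty)\|\phi_1-\phi_2\|_\infty$. Taking the closed ball
\equ{\mathcal B = \bigl\{\phi\in C^{2,\beta}(M_\varepsilon)\ :\ \|\phi\|_\infty\leq M\varepsilon\log(1/\varepsilon),\ \int_{M_\varepsilon}\chi Z_i\phi = 0\bigr\}}
with $M$ large but fixed, $\mathcal A$ sends $\mathcal B$ into itself and is a $\tfrac12$-contraction there once $\varepsilon$ is small enough (since $\log(1/\varepsilon)\cdot \varepsilon\log(1/\varepsilon)\to 0$). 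Banach's fixed point theorem produces the unique solution $\phi$ with the stated bound, and the constants $c_i$ come out of the construction of $T$. The main obstacle is item (a): the cancellations in the transition annulus are delicate and rely on matching the inner ansatz to the Green's function expansion provided by Lemma \ref{lema2} down to $O(\varepsilon)$ precision, which is exactly why the parameter $\delta$ must be tuned as above.
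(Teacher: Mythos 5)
Your route is the same as the paper's: write the equation as a fixed point $\phi=T(N(\phi)+E)$ for the operator $T$ of Proposition~\ref{PropLin}, estimate $E$ and $N$ in $\|\cdot\|_*$, and run Banach's fixed point theorem on the ball $\|\phi\|_\infty\le \vartheta\,\varepsilon\log\tfrac1\varepsilon$. That part is all correct, and your broad-brush description of the regions where $E$ must be controlled is consistent with what the paper calls ``a direct computation'' yielding $\|E\|_*\le C\varepsilon$.

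There is, however, a concrete slip in step (b). You assert the pointwise bound $\lambda^{-2}f(\varepsilon y)\,e^{V_\varepsilon}(y)\le C\max\{\varepsilon^2,|y|^{-2-\sigma}\}$ and deduce $\|N(\phi)\|_*\le C\|\phi\|_\infty^2$. That pointwise inequality is false in an intermediate regime. Near the bubble one has $\lambda^{-2}f(\varepsilon y)e^{V_\varepsilon}\sim \delta^2|y|^{-2}$ for $1\ll|y|\lesssim 1/\lambda$; comparing with the weight, the ratio $\delta^2|y|^{-2}\big/\max\{\varepsilon^2,|y|^{-2-\sigma}\}$ peaks near $|y|\sim\varepsilon^{-2/(2+\sigma)}$ where it has size $\delta^2\varepsilon^{-2\sigma/(2+\sigma)}$, which diverges as $\varepsilon\to0$ (recall $\delta\sim 1/|\log\lambda|$ goes to zero only logarithmically, while $\varepsilon^{-2\sigma/(2+\sigma)}$ grows polynomially). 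The correct estimate, as the paper records, is $\|\lambda^{-2}f(\varepsilon y)e^{V_\varepsilon}(e^\phi-1-\phi)\|_*\le C\varepsilon^{-\sigma}\|\phi\|_\infty^2$, so that $\|N(\phi)\|_*\le C\varepsilon^{-\sigma}\|\phi\|_\infty^2$ and the Lipschitz constant of $\phi\mapsto N(\phi)$ on your ball is $O(\varepsilon^{1-\sigma}\log\tfrac1\varepsilon)$, not $O(\varepsilon\log\tfrac1\varepsilon)$. This does not damage the conclusion: since $\sigma<1$, the factor $\varepsilon^{1-\sigma}\log\tfrac1\varepsilon$ still tends to $0$, and $\|T(N(\phi)+E)\|_\infty\le C\log\tfrac1\varepsilon\big(\varepsilon^{-\sigma}\|\phi\|_\infty^2+\varepsilon\big)$ still maps the ball $\|\phi\|_\infty\le \vartheta\varepsilon\log\tfrac1\varepsilon$ into itself for $\varepsilon$ small, with contraction. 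So the argument closes, but the intermediate bound you state cannot be taken at face value; you need the $\varepsilon^{-\sigma}$ loss and then observe that it is harmless.

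Two smaller points: the restriction to $\sigma<1$ in the definition of $\|\cdot\|_*$ is exactly what makes this loss acceptable, so it is worth flagging; and the constants $c_i$ are not ``produced by the construction of $T$'' in general---$T$ solves the \emph{projected} problem~(\ref{ProbPhi2}), so $\phi=T(N(\phi)+E)$ is automatically a solution of~(\ref{ProbNonLin}) with some $c_i$; making $c_i=0$ is the separate reduction carried out in Section~\ref{sec6}, not part of this lemma.
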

\begin{proof}
In terms of the operator $T$ defined in Proposition (\ref{PropLin}), problem (\ref{ProbNonLin}) becomes
\eq{\phi=T(N(\phi)+E)=:A(\phi).\label{nonlin}}
For a given number $\vartheta>0$, let us consider the space
\equ{H_\vartheta=\left\{\phi \in C(M_\varepsilon)\,:\,\|\phi\|_\infty \leq \vartheta \varepsilon \log \frac{1}{\varepsilon} \right\}.}
From Proposition (\ref{PropLin}), we get
\equ{\|A(\phi)\|_\infty\leq C\log \left(\frac{1}{\varepsilon}\right)(\|N(\phi)\|_*+\|E\|_*).}
Let us first measure how well $V_\varepsilon$ solves problem (\ref{probEscalado}).
Observe that
\eq{e^{V_\varepsilon(y)}=\lambda^4\delta^2 e^{v_\varepsilon(x)},\quad y=\frac{x}{\varepsilon}, x\in M, \label{estimate}}
so
\equ{\|e^{V_\varepsilon(y)}\|_*\leq C\varepsilon.}
As a consequence of the construction of the first approximation, the choice of the parameter $\delta$, the expansion of the Green function $G$ around $p$, and \eqref{estimate}, a direct computation yields
\equ{\|E\|_*\leq C\varepsilon.}
Now we estimate
\equ{N(\phi)=\lambda^{-2}f(\varepsilon y)e^{V_\varepsilon}(e^\phi-1-\phi)-e^{V_\varepsilon}(e^\phi-1-\phi).}
In one hand, from (\ref{estimate}) we deduce
\equ{\|e^{V_\varepsilon}(e^{\phi}-1-\phi)\|_*\leq C\varepsilon\|\phi\|_\infty^2.}
In the other hand
\equ{\lambda^{-2}f(\varepsilon y) e^{V_\varepsilon(y)}=\lambda^2\delta^2 e^{v_\varepsilon(x)},\quad y=\frac{x}{\varepsilon}, x\in M,}
so
\equ{\|\lambda^{-2}f(\varepsilon y)e^{V_\varepsilon}(e^{\phi}-1-\phi)\|_*\leq C\varepsilon^{-\sigma}\|\phi\|_\infty^2.}
We conclude,
\equ{\|N(\phi)\|_*\leq C\varepsilon^{-\sigma}\|\phi\|^2_\infty.}
Observe that for $\phi_1,\phi_2\in H_\vartheta$,
\equ{\|N(\phi_1)-N(\phi_2)\|_*\leq C\vartheta\varepsilon^{1-\sigma}\log\left(\frac{1}{\varepsilon}\right)\|\phi_1-\phi_2\|_\infty,}
where $C$ is independent of $\vartheta$. Hence, we have
\begin{align*}
\|A(\phi)\|_\infty &\leq C \varepsilon \log\left(\frac{1}{\varepsilon}\right)[\vartheta^2\varepsilon^{1-\sigma}\log\left(\frac{1}{\varepsilon}\right)+1],\\
\|A(\phi_1)-A(\phi_2)\|_\infty&\leq C \varepsilon^{1-\sigma} \log\left(\frac{1}{\varepsilon}\right) \|\phi_1-\phi_2\|_\infty.
\end{align*}
It follows that there exist $\varepsilon_0$, such that for all $\varepsilon<\varepsilon_0$ the operator $A$ is a contraction mapping from $H_\vartheta$ into itself, and therefore $A$ has a unique fixed point in $H_\vartheta$. This concludes the proof.
\end{proof}

With these ingredients we are now ready for the proof of our main result.

\section{Proof of Theorem \ref{main} for $n=1$}\label{sec6}
After problem (\ref{ProbNonLin}) has been solved, we find a solution to problem (\ref{ProbPhi}), and hence to the original problem, if $k=k(\varepsilon)$ is such that
\eq{c_i(k)=0,\quad i=1,2.\label{eqCi}}
Let us consider local conformal coordinates around $p$ and define $r=|y|$. We consider a smooth cut-off function $\eta(r)$ such that $\eta(r)=1$ for $r<\frac{1}{\sqrt{\varepsilon}}$, $\eta(r)=0$ for $r>\frac{2}{\sqrt{\varepsilon}}$, $|\eta'(r)|\leq C\sqrt{\varepsilon}$, $|\eta''(r)|\leq C\varepsilon$. Testing the equation
\equ{L(\phi)=N(\phi)+E+\sum_{i=1}^{2}c_i\chi Z_i,}
against $\eta Z_i$, $i=1,2,$ we find
\equ{\langle L(\phi),\eta Z_i\rangle=\int_{M_\varepsilon}[N(\phi)+E]\eta Z_i +c_i\int_{M_\varepsilon}\chi Z_i^2,\quad i=1,2.}
Therefore, we have the validity of (\ref{eqCi}) if and only if
\equ{\langle L(\phi),\eta Z_i\rangle-\int_{M_\varepsilon}[N(\phi)+E]\eta Z_i=0,\quad i=1,2.}
We recall that in the proof of Proposition (\ref{PropLin}) we obtained
\equ{|\langle \phi,L(\eta Z_i) \rangle|\leq C\sqrt\varepsilon\|\phi\|_\infty,}
thus
\equ{|\langle \phi,L(\eta Z_i) \rangle| \leq C \varepsilon^{3/2}\log \frac{1}{\varepsilon}.}
Observe that
\equ{\|N(\phi)\|_\infty\leq C\varepsilon^2\|\phi\|^2_\infty,}
so
\equ{\left|\int_{M_\varepsilon}N(\phi)\eta Z_i\right|\leq C\varepsilon\|\phi\|_\infty^2\leq C \varepsilon^3 \log^2 \frac{1}{\varepsilon}}
Let us remember that
\equ{E=-\Delta V_\varepsilon +\lambda^{-2}f(\varepsilon y)e^{V_\varepsilon}-e^{V_\varepsilon}-\varepsilon^2\alpha.}
Using (\ref{estimate}), we have
\equ{\int_{M_\varepsilon}e^{V_\varepsilon}\eta Z_i=O(\varepsilon^4).}
We also have,
\equ{\int_{M_\varepsilon}\varepsilon^2\alpha \eta Z_i=O(\varepsilon).}
Observe that
\equ{\Delta_g V_\varepsilon(y)=\varepsilon^2 \Delta_g v_\varepsilon(x), \quad y=\frac{x}{\varepsilon},x\in M,}
thus
\equ{\int_{M_\varepsilon}\Delta V_\varepsilon \eta Z_i=O(\varepsilon^2).}
Also, by change of variables we have
\equ{\int_{M_\varepsilon} f(\varepsilon y)e^{V_\varepsilon}\eta Z_i=\int_{\tilde{M}_\varepsilon}f(p+\varepsilon(y+k))e^{V_\varepsilon(y+k+p')}\eta(|y+k|)Z_i(y+k+p'),}
where $\tilde{M}_\varepsilon=M_\varepsilon-{k+p'}$. Using the fact that $p$ is a local maximum of $f$ of value $0$, we have
\equ{f(p+\varepsilon(y+k))=\varepsilon^2\langle (y+k),D^2f(p)(y+k)\rangle+O(\varepsilon^3),}
where we used the fact that $f\in C^3(M)$. Thus
\equ{\lambda^{-2}\int_{M_\varepsilon} f(\varepsilon y)e^{V_\varepsilon}\eta Z_i= I_i+II_i,}
where
\begin{align*}
I_i&=\delta^2\int_{\tilde{M}_\varepsilon}\langle (y+k),H_f(p)(y+k)\rangle e^{V_\varepsilon(y+k+p')}\eta(|y+k|)Z_i(y+k+p')\\
II_i&=\int_{\tilde{M}_\varepsilon}O(\varepsilon)e^{V_\varepsilon(y+k+p')}\eta(|y+k|)Z_i(y+k+p').
\end{align*}
Observe that $e^{V_\varepsilon(y+k+p')}\eta(|y+k|)Z_i(y+k+p')=O((1+|y|)^{-4})$, so
\equ{II_i=O(\varepsilon).}
Finally, let us compute $I_i$. In the first place, observe that $0\in \tilde{M}_\varepsilon$. Let us consider a fixed number $A_0$, such that $\mathcal{B}_1=B(0,A_0/\sqrt{\varepsilon})\subset \tilde{M}_\varepsilon\cap\mbox{supp}(\eta(\cdot+k)):=\mathcal{B}$ and $\eta(\cdot+k)=1$ in $\mathcal{B}_1$. We have the decomposition $\mathcal{B}=\mathcal{B}_1+\mathcal{B}_2$, where $\mathcal{B}_2=\tilde{\Omega}_\varepsilon\cap\mbox{supp}(\eta(\cdot+k))\setminus \mathcal{B}_1$. Also, observe that
\equ{Z_i(y+k+p')=C_0\frac{y_i}{1+|y|},\quad i=1,2,}
where $C_0$ is a fixed constant independent of $\varepsilon$. We have the following computation
\equ{\langle (y+k),D^2f(p)(y+k)\rangle= f_{11}(p)(y_1+k_1)^2+2f_{12}(p)(y_1+k_1)(y_2+k_2)+f_{22}(p)(y_2+k_2)^2,}
where $f_{11}(p)=\frac{\partial^2 f}{\partial y_1^2}(p)$, $f_{22}(p)=\frac{\partial^2 f}{\partial y_2^2}(p)$ and $f_{12}(p)=f_{21}(p)=\frac{\partial^2 f}{\partial y_1\partial y_2}(p)$. We recall that
\eq{e^{V_\varepsilon(y+k+p')}=\frac{H_0}{(1+|y|^2)^2}(1+C\sqrt{\varepsilon}+O(\varepsilon)),\label{taylor2}}
in the region $\tilde{\Omega}_\varepsilon\cap\mbox{supp}(\eta(\cdot+k))$. We define $t(y)=e^{V_\varepsilon(y+k+p')}\eta(|y+k|)Z_i(y+k+p')$. We have
\begin{align*}
\int_{\mathcal B}f_{11}(p)(y_1+k_1)^2 t(y)&=\int_{\mathcal B_1}f_{11}(p)(y_1+k_1)^2 t(y)+\int_{\mathcal B_2}f_{11}(p)(y_1+k_1)^2 t(y)\\
&=2k_1f_{11}(p)\int_{\mathcal{B}_1}C_0\frac{y_1^2}{1+|y|}\frac{H_0}{(1+|y|^2)^2}+O(\varepsilon).
\end{align*}
In order to get the previous result, we used the fact that
\equ{\int_{\mathcal B_1}\frac{y_1}{1+|y|}\frac{dy}{(1+|y|^2)^2}=\int_{\mathcal B_1}\frac{y_1^3}{1+|y|}\frac{dy}{(1+|y|^2)^2}=0,}
and the expansion (\ref{taylor2}). We also have
\equ{
\int_{\mathcal B_1+\mathcal B_2}2f_{12}(p)(y_1+k_1)(y_2+k_2) t(y)=2k_2f_{12}(p)\int_{\mathcal{B}_1}C_0\frac{y_1^2}{1+|y|}\frac{H_0}{(1+|y|^2)^2}+O(\varepsilon),}
where we used the fact that
\equ{\int_{\mathcal B_1}\frac{y_1y_2}{1+|y|}\frac{1}{(1+|y|^2)^2}=0,}
and also the expansion (\ref{taylor2}). Finally, we have
\equ{\int_{\mathcal B_1+\mathcal B_2}f_{22}(p)(y_2+k_2)^2 t(y)=O(\varepsilon),}
where we used the fact that
\equ{\int_{\mathcal B_1}\frac{y_1y_2^2}{1+|y|}\frac{1}{(1+|y|^2)^2}=0,}
and also the expansion (\ref{taylor2}). From the above computations we conclude that
\equ{I_1=2\delta^2Ik_1f_{11}(p)+2\delta^2Ik_2f_{12}(p)+O(\varepsilon),}
where
\equ{I=\int_{\mathcal{B}_1}C_0\frac{y_1^2}{1+|y|}\frac{H_0}{(1+|y|^2)^2} >0 .}
Similar computations yield
\equ{I_2=2\delta^2Ik_1f_{12}(p)+2\delta^2Ik_2f_{22}(p)+O(\varepsilon).}

Summarizing, we have the system
\eq{\delta^2D^2f(p)k=\varepsilon b(k),\label{sys}}
where $b$ is a continuous function of $k$ of size $O(1)$. Since $p$ is a non-degenerate critical point of $f$, we know that $D^2f(p)$ is invertible. A simple degree theoretical argument, yields that system (\ref{sys}) has a solution $k=O(\lambda\delta^{-1})$. We thus obtain $c_1(k)=c_2(k)=0$, and we have found a solution of the original problem. The proof for the case $k=1$ is thus concluded. \qed
\section{Proof of Theorem \ref{main} for  general $n$ }
In this section we will detail the main changes in the proof of our main result, in the case of multiple bubbling. \newline

Let $p_1,\ldots,p_n$ be points such that $f(p_j)=0$ and $D^2f(p_j)$ is positive definite for each $j$. We consider the singular problem
\eq{\Delta_g G-fe^G+8\pi\sum_{j=1}^k \delta_{p_j}+\alpha=0, \quad \mbox{in }M,\label{multi}}
where $\delta_{p}$ designates the Dirac mass at the point $p$.
A first remark we make is that the proof of Lemma \ref{lema2} applies with no changes (except some additional notation) to find the result of Lemma \ref{lema1}. Indeed, the core of the proof is the local asymptotic analysis around each point $p_j$.

\medskip

We define the first approximation in the original variable as
$$U_\varepsilon=\sum_{j=1}^{n}\eta_ju_\varepsilon^j+\left(1-\sum_{j=1}^{n}\eta_j\right)G,$$
where $\eta_j$ is defined around $p_j$ as in Section \ref{SecAprox} and, in local conformal coordinates around $p_j$, $u_\varepsilon^j(x)=w_{\delta_j}(|x-k_j|)+\tilde{F}_j(|x|)$, for parameters $k_j\in \R^2$. We make the following choice of the parameters $\delta_j$
\equ{\log 8\delta_i^2=-2\log\left(\frac{1}{\sqrt 2}\log \frac{1}{\lambda}\right)+\mathcal{H}(p_i).}
We also define the first approximation in the expanded variable around each $p_j$ by
\equ{V_{\ve_j }(y)=U_{\ve}(\ve_j y)+4\log \lambda + 2 \log \delta_j,\quad y\in M_{\ve_j}}
where $\ve_j=\lambda \delta_j$ and $M_{\ve_j}=\ve_j^{-1}M$.

\medskip
We look for a solution of problem (\ref{problem}) of the form $u(y)=U_\varepsilon(x)+\phi(x)$, where $\phi$ represent a lower order correction. By simplicity, we denote also by $\phi$ the small correction in the expanded variable around each $p_j$. In terms of $\phi$, the expanded problem around $p_j$
\equ{\Delta_g v-\lambda^{-2}f(\ve_j y)e^v+e^v+\ve_j^2\alpha=0,\quad y\in M_{\ve_j}}
reads
\equ{ L_j(\phi)=N_j(\phi)+E_j,\quad\mbox{in }M_{\varepsilon_j}, }
where
\begin{align*}
L_j(\phi)\colonequals&\Delta_g \phi -\lambda^{-2}f(\varepsilon_j y)e^{V_{\varepsilon_j}}\phi+e^{V_{\varepsilon_j}}\phi,\\
N_j(\phi)\colonequals&\lambda^{-2}f(\varepsilon_j y)e^{V_{\varepsilon_j}}(e^\phi-1-\phi)-e^{V_{\varepsilon_j}}(e^\phi-1-\phi),\\
E_j\colonequals&-(\Delta_g V_{\varepsilon_j}-\lambda^{-2}f({\varepsilon_j} y)e^{V_{\varepsilon_j}}+e^{V_{\varepsilon_j}}+{\varepsilon_j}^2\alpha).
\end{align*}

\medskip
Next we consider the linearized problem around our first approximation $U_\ve$.
Given $h$ of class $C^{0,\beta}(M)$, which by simplicity we still denote by $h$ in the expanded variable around each $p_j$, we consider the linear problem of finding a function $\phi$ such that for certain scalars $c_i^j,\,i=1,2;\ j=1,\ldots, n  $, one has
\eq{\left\lbrace
\begin{array}{rcll}
L_j(\phi)&=&h+\sum_{i=1}^{2}\sum_{j=1}^n c_i^j\chi_j Z_{ij}&\mbox{in }M_{\varepsilon_j},\\
\displaystyle \int_{M_{\varepsilon_j}} \chi_j Z_{ij} \phi&=&0&\mbox{for all } i,j.
\end{array}\right.\label{ProbPhi5}}
Here the definitions of $Z_{ij}$ and $\chi_j$ are the same as before for $Z_i$ and $\chi$, with the dependence of the point $p_j$ emphasized.

\medskip
 To solve this problem we consider now the norm
\eq{\|h\|_{*} =  \sum_{j=1}^n \|h\|_{*,p_j}. \label{norma1} }
where $\|h\|_{*,p_j}$ is defined accordingly with \eqref{norma}.
With exactly the same proof as in the case $n=1$, we find the unique bounded solvability of Problem
\ref{ProbPhi5} for all small $\ve=\max \ve_i$ by $\phi=T(h)$, so that
\eq{\|T(h)\|_\infty\leq C\log\left(\frac{1}{\varepsilon}\right)\|h\|_{*}.}
Then we argue as in the proof of
Lemma \ref{lemita} to obtain existence and uniqueness of a small solution $\phi$ of the projected nonlinear problem

\equ{\left\lbrace
\begin{array}{rcll}
L_j(\phi)&=&N_j(\phi)+E_j+
\sum_{i=1}^{2}\sum_{j=1}^n c_i^j\chi_j Z_{ij}&\mbox{in }M_{\varepsilon_j},\\
\displaystyle \int_{M_{\varepsilon_j}} \chi_j Z_{ij} \phi&=&0&\mbox{for all } i,j.
\end{array}\right.\label{ProbNonLin2}}
with
\equ{\|\phi\|_\infty \leq C\varepsilon \log \frac{1}{\varepsilon}.}

After this, we proceed as in Section \ref{sec6} to choose the parameters $k_j$ in such a way
that $c_i^j=0$ for all $i,j$.
Summarizing, we have the system
\eq{D^2f(p_j)k_j=\varepsilon_i \delta_i^{-2} b_j(k_1,\ldots, k_n),\label{sys1}}
which can be solved by the same degree-theoretical argument employed before.
The proof is concluded. \qed

\begin{center}
\bfseries{Acknowledgement}
\end{center}
The authors have been supported by grants Fondecyt 110181 and Fondo Basal CMM-Chile.


\begin{thebibliography}{99}

\bibitem{ab} Aubin, T.; Bismuth, S. Prescribed scalar curvature on compact Riemannian manifolds in the negative case J. Funct. Anal. 143 (1997), no. 2, 529-541.

\bibitem{f} Baraket, S.; Pacard, F. Construction of singular limits for a semilinear elliptic equation in dimension 2. Calc. Var. 6 (1998), no. 1, 1-38.

\bibitem{berger}  Berger, M. S. Riemannian structures of prescribed Gaussian curvature for compact 2-
manifolds. J. Differential Geometry 5 (1971), 325-332.

\bibitem{b} Bismuth, S. Prescribed scalar curvature on a $C^\infty$ compact Riemannian manifold of dimension two. Bull. Sci. Math. 124 (2000), no. 3, 239-248.

\bibitem{struwe} Borer, F.; Galimberti, L.; Struwe, M. "Large" conformal metrics of prescribed Gauss curvature on surfaces of higher genus, Comm. Math. Helv. (to appear)

\bibitem{L0} Brezis, H.; Merle, F. Uniform estimates and blow-up behavior for solutions of $-\Delta u=V(x)e^u$ in two dimensions. Comm. Partial Differential Equations 16, (1991), 1223-1253.

\bibitem{cgy} Chang, S.-Y.; Gursky, M.; Yang, P. The scalar curvature equation on 2- and 3-spheres. Calc. Var. 1 (1993), 205-229.

\bibitem{cl} Chen, C.-C.; Lin, C.-S. Topological degree for a mean field equation on Riemann surfaces. Comm. Pure Appl. Math. 56 (2003), 1667-1727 (2003).

\bibitem{L1} del Pino, M.; Kowalczyk, M.; Musso, M. Singular limits in Liouville-type equations. Calc. Var. Partial Differential Equations, 24 (2005), 47-81.

\bibitem{g} Ding, W. Y.; Liu, J. A note on the prescribing Gaussian curvature on surfaces, Trans. Amer. Math. Soc. 347 (1995), 1059-1066.

\bibitem{L2} Esposito, P.; Grossi, M.; Pistoia, A. On the existence of blowing-up solutions for a mean field equation, Ann. Inst. H. Poincar\'e Anal. Non Lin\'eaire 22 (2005), 227-257.

\bibitem{kw1} Kazdan, J. L.; Warner, F. W. Curvature functions for compact 2-manifolds. Ann. of Math. (2) 99 (1974), 14-47.

\bibitem{kw2} Kazdan, J. L.; Warner, F. W. Scalar curvature and conformal deformation of Riemannian structure. J. Differential Geometry 10 (1975), 113-134.

\bibitem{kw3} Kazdan, J. L.; Warner, F. Existence and conformal deformation of metrics with prescribed Gaussian and scalar curvatures. Ann. Math. 101 (1975), 317-331.

\bibitem{L3} Li, Y.-Y.; Shafrir, I. Blow-up analysis for solutions of $-\Delta u= Ve^u$ in dimension two. Indiana Univ. Math. J. 43 (1994), 1255-1270.

\bibitem{L4} Ma, L.; Wei, J. Convergence for a Liouville equation. Comment. Math. Helv. 76 (2001), 506-514.

\bibitem{moser} Moser, J.;   On a nonlinear problem in differential geometry. Dynamical systems (Proc. Sympos.,
Univ. Bahia, Salvador, 1971), pp. 273-280. Academic Press, New York, 1973.


\bibitem{L5} Weston, V.H.: On the asymptotic solution of a partial differential equation with an exponential nonlinearity. SIAM J. Math. Anal. 9 (1978), 1030-1053.




\end{thebibliography}
\end{document}